\documentclass[11pt,reqno]{amsart}
\newcommand{\mysection}[1]{\section{#1}
      \setcounter{equation}{0}}
      \usepackage{amsmath}

\newtheorem{theorem}{Theorem}[section]
\newtheorem{lemma}[theorem]{Lemma}

\newtheorem{corollary}[theorem]{Corollary}

\theoremstyle{definition}
\newtheorem{assumption}{Assumption}[section]

\newtheorem{example}{Example}[section]

\theoremstyle{remark}
\newtheorem{remark}{Remark}[section]

\newcommand\bG{\mathbb{G}}
\newcommand\bR{\mathbb{R}}

\newcommand\bW{\mathbb{W}}

\newcommand\frm{\mathfrak{m}}

\newcommand\cB{\mathcal{B}}

\newcommand\cF{\mathcal{F}}

\newcommand\cK{\mathcal{K}}
\newcommand\cL{\mathcal{L}}
\newcommand\cM{\mathcal{M}}

\newcommand\cO{\mathcal{O}}
\newcommand\cP{\mathcal{P}}

\newcommand\cR{\mathcal{R}}

\newcommand{\ga}{\mathfrak{a}}
\newcommand{\gb}{\mathfrak{b}}

\begin{document}

\title[Accelerated schemes]{
 Accelerated finite difference 
schemes for   linear  stochastic partial differential 
equations in the whole space}

\author[I. Gy\"ongy]{Istv\'an Gy\"ongy}
\address{School of Mathematics and Maxwell Institute,
University of Edinburgh,
King's  Buildings,
Edinburgh, EH9 3JZ, United Kingdom}
\email{gyongy@maths.ed.ac.uk}

\author[N. Krylov]{Nicolai Krylov}%
\thanks{The work of the second author was partially supported
by NSF grant DMS-0653121}
\address{127 Vincent Hall, University of Minnesota,
Minneapolis,
       MN, 55455, USA}
\email{krylov@math.umn.edu}

\subjclass{65M06, 60H15, 65B05} 
\keywords{Cauchy problem, 
finite differences, extrapolation to the limit, 
 Richardson's method,  linear SPDEs  }

\begin{abstract}
We give sufficient conditions under which the 
convergence of finite difference approximations 
in the space variable of  the solution to the Cauchy 
problem for linear  stochastic PDEs of parabolic type 
can be accelerated 
to any given order of convergence by Richardson's method.  
\end{abstract}

\maketitle

\mysection{Introduction}                   \label{section02.04.06}

Stochastic partial differential equations (SPDEs) play important roles 
in many applied fields. Here we consider linear second order nondegenerate 
parabolic SPDEs. These equations arise, for example, 
in nonlinear filtering of partially observable
diffusion processes. There are various methods developed 
in the literature to solve them numerically. 
In this paper we apply the method of finite differences in the space 
variable, while the time
variable changes continuously.  It is known that in general
the error  of the finite difference approximations 
in the space variable is proportional to 
the parameter $h$ of the finite difference, see,  
e.g., \cite{Y} or  \cite{Y1}. 
Our aim is to show that the convergence 
of these approximations can be 
accelerated by an implementation of Richardson's idea to 
SPDEs. We prove that for linear parabolic stochastic PDEs 
driven by Wiener processes the finite difference approximations 
$u^h$ admit power series expansions in the parameter $h$. 
This is Theorem \ref{theorem 5.25.11}, 
one of the main results of the paper. 
Hence we get Theorem 
\ref{theorem 5.25.3}, our first result on acceleration of 
finite difference schemes for SPDEs. 
It says that if the coefficients 
and the data are sufficiently regular 
then the convergence of finite difference approximations 
can be accelerated to any high order 
by taking appropriate mixture of approximations with different 
step sizes. In the special case 
of symmetric finite difference schemes,  
Example \ref{example 5.22.2} below, 
the coefficients of odd powers in the expansions vanish. 
Hence it follows, see Theorem \ref{theorem 5.8.5.9}, 
that the error of symmetric finite difference schemes 
is proportional to $h^2$ without acceleration, 
and we can accelerate more effectively.

The SPDEs we consider in this paper 
are given in $[0,T]\times\bR^d$. 
The finite difference schemes are given in
$[0,T]\times\mathbb G_h$, where 
$\mathbb G_h$ are grids in the space 
variable. 
The supremum in $t\in[0,T]$ and $x\in\mathbb G_h$ 
of the remainder terms and of the approximation errors 
in the expansions in Theorem  \ref{theorem 5.25.11}
and Theorems \ref{theorem 5.25.3}-\ref{theorem 5.8.5.9}, 
respectively  are estimated.  
To prove these results we consider the finite difference schemes 
given not only on the grids, but on the whole $\mathbb R^d$, and 
obtain a more general theorem, Theorem \ref{theorem 5.29.1},  
that establishes a power expansion in $h$ for the $L_2$-solutions of the 
schemes on $[0,T]\times\bR^d$, with the 
remainder estimated in terms of Sobolev norms 
in the whole $\mathbb R^d$. Hence we estimate 
the sup norm and also 
discrete Sobolev norms of the remainder by Sobolev's 
embedding theorems, and get 
our theorems on accelerated finite difference schemes, formulated 
in terms of supremum norm and also in discrete Sobolev norms 
of functions over $\mathbb G_h$.

In the special case when the stochastic terms in the 
equations vanish the above mentioned 
theorems are results 
on accelerated finite  difference schemes for deterministic PDEs.  Similar
results  on {\em monotone\/} finite difference schemes  
for parabolic and elliptic PDEs, which may degenerate,  
are proved in \cite{GK3} on the basis of derivative 
estimates on the supremum norm obtained 
in \cite{GK1}-\cite{GK2} for solutions to monotone finite 
difference schemes. The finite difference
schemes in the present article are not necessarily
monotone. 

The idea of accelerating the 
convergence of finite difference approximations to 
deterministic PDEs by suitable mixtures of approximations 
with different step-sizes is  due to L.F.~Richardson, 
see \cite{Ri} and \cite{RG}. This method is often called 
{\em Richardson's method\/} or {\em extrapolation to the 
limit\/}, and is applied to various types of approximations. 
It is used in \cite{GK4}-\cite{GK5}  
to accelerate splitting up approximations for a large class 
of deterministic evolution equations, 
including second order parabolic equations  
and symmetric hyperbolic system of first order PDEs. 
Richardson's idea is implemented to the law of Euler's 
approximations for stochastic differential equations 
in \cite{TT}, \cite{BT} and \cite{MT}. 
There is a lot of other applications of Richardson's 
method. 
The reader is referred to the survey papers 
\cite{Br} and \cite{J} for a review  
on  the method, and  to textbooks 
(for instance,  \cite{Ma} and \cite{MS})
concerning finite difference 
methods and their accelerations. 
 We note that previous extrapolation results  
for stochastic equations, i.e. in \cite{TT}, and in its generalizations 
\cite{MT} and \cite{KPH},  
are concerned with {\it week 
approximations} of stochastic differential equations. In contrast 
our main results are error expansions for {\it strong 
convergence} of finite difference approximations in the space variable 
for stochastic parabolic equations, and as far as we know these are the first 
results in this direction. 

In light of the results of the present paper it is
natural to look  for accelerated space {\em and time}
 discretized
schemes, say by using time discretization
to solve the systems of ordinary stochastic equations which we
obtain after discretizing the space. However, one knows 
 that if the values of the driving multidimensional Wiener process 
 are available only at the grid points, then in general one cannot construct a scheme 
 with (strong) rate of convergence   better than $\sqrt{\tau}$, where 
 $\tau$ is the mesh-size of the time grid. On the other hand, in some particular 
 cases, e.g., when the Wiener process is one-dimenional, or some special data, 
 like iterated stochastic integrals of the components of the Wiener 
 processes are available, then one can have accelerated fully discretised numerical 
 schemes for SPDEs. (See, e.g., \cite{KP}  for high order strong 
 approximations of stochastic differential equations when appropriate iterated 
 stochastic integrals of the Wiener processes are used in 
the numerical schemes.)

%Our paper seems to be the first one to justify the method 
%for finite difference approximations in the space variable 
%for stochastic parabolic equations. 

We did not try to make our results as sharp or as
general as possible. The main goal of the article
is to show a method of approximating. 
We plan to extend 
our results to the case of degenerate parabolic SPDEs 
in the continuation of this paper.

In conclusion we introduce some notation used everywhere
below.
    Throughout the paper $\bR^{d}$ is a Euclidean space
of points $x=(x^{1},...,x^{d})$, 
 and $T>0$ is a fixed finite constant. 
 We set
$$
D_{i}=\partial/\partial x^{i},\quad
  i=1,\dots,d.
$$
Also let $D_{0}$ be the unit operator.

Let $(\Omega,\cF,P)$ be a complete probability space,
$\cF_{t}$, $t\geq0$, be an increasing filtration
of sub $\sigma$-fields of $\cF$, such that $\cF_{0}$
is complete with respect to $(\cF,P)$. By
$\cP$ we denote the $\sigma$-field of predictable subsets of
$\Omega\times [0,\infty)$ generated by $\cF_{t}$, and 
$\cB(\bR^d)$ is the $\sigma$-algebra of the Borel subsets 
of $\bR^d$.  
We assume that on $\Omega$ we are given
a sequence of $\mathcal F_t$-adapted 
independent Wiener processes $\{w^{\rho}\}_{\rho=1}^{\infty}$ 
such that for every integer $\rho\geq1$ and for all $0\leq s\leq t$ 
the increments  $w_t^{\rho}-w_s^{\rho}$  
are independent of $\mathcal F_s$. 
Unless otherwise stated throughout the paper we use the summation
convention over repeated integer valued indices. 
  For functions 
$u=u(\omega,t,x)$, $\omega\in \Omega$, $t\in[0,T]$,  $x\in\bR^d$, we 
use the notation $D^{l}u=D^{l}u(x)$ for  the collection of 
$l$th order derivatives of $u$ with respect
to $x$ and $|D^{l}u|^{2}=|D^{l}u(x)|^{2}$ is the sum of squares of
all $l$th order derivatives at $x$. If $u$ is an $l_2$-valued function 
then the differentiability of it is understood in the sense of $l_2$-valued 
functions and $|D^{l}u(x)|^{2}_{l_2}$ means the the sum of squares of the 
$l_2$-norm of  all $l$th order derivatives at $x$. 
 For basic notions and notation concerning the theory 
of linear stochastic partial differential equations we refer to 
\cite{R}.  

\bigskip
 
{\bf Acknowledgments.} The first version of this paper was 
presented at the ICMS conference on `Numerical Analysis of Stochastic PDEs' 
(Edinburgh,  May 2009), 
organised by Evelyn Buckwar and Gabriel Lord, and at the `Stochastic Analysis' session 
of the 7th International ISAAC Congress 
(Imperial College, London, August 2009)
organized by Dan Crisan and Terence Lyons.  
We thank the organisers for the invitations. 

 We are sincerely grateful to the referees for their careful work which
helped improve the presentation of the paper.

\mysection{Formulation of the main results}

We consider the equation

\begin{equation}                          \label{3.25.4.9}
du_{t}=(\cL_{t}u_{t}
+f _{t})\,dt
+(\cM^{\rho}_{t}u_{t}+g^{\rho}_{t})\,dw^\rho_{t}, 
\end{equation}
for $(t,x)\in[0,T]\times\bR^d=: H_T$ 
with some initial condition where 
$$
\cL_{t}\phi= a^{\alpha\beta}_{t}D_{\alpha}
D_{\beta} \phi ,
\quad
\cM^{\rho}_{t}\phi= b^{\alpha \rho}_{t}D_{\alpha}
  \phi,
$$
and  $\{w^{\rho}\}_{\rho=1}^{\infty}$ is a sequence of 
independent Wiener processes given on a probability space 
$(\Omega, \cF,P)$ equipped with a filtration $(\cF)_{t\geq0}$ 
such that $w^{\rho}_t$ is $\mathcal F_t$-measurable and 
$w^{\rho}_t-w^{\rho}_s$ is independent of $\mathcal F_s$ 
for all $0\leq s\leq t$ and integers $\rho\geq1$. 
 Here and below the summation
with respect to $\alpha$ and $\beta$ is performed over the set 
$ \{0,1,...,d\}$ and with respect to $\rho$ in the range
$\{1,2,...\}$. Assume that,
for $\alpha,\beta\in \{0,1,...,d\}$, we have $a^{\alpha\beta}_{t}=
 a^{\beta\alpha}_{t}$ and
$a^{\alpha\beta}_{t}=a^{\alpha\beta}_{t}(x)$
 are real-valued and $b^{\alpha}_{t}
=(b^{\alpha \rho}_{t})_{\rho=1}^{\infty}$   are $l_{2}$-valued
$\cP\times\cB(\bR^d)$-measurable functions on $\Omega\times H_{T}$.

Let $m\geq1$ be an integer and let $W^{m}_{2}$ be  
 the  usual
Hilbert-Sobolev space 
  of functions on $\bR^d$  with norm $\|\cdot\|_{W^{m}_{2}}$.

\begin{assumption} 
                                   \label{assumption 5.19.2}
(i) For each
$(\omega,t)$ the functions $a^{\alpha\beta}_{t}$
 are $m$ times and the functions $b^{\alpha}_{t}$ are $m+1$
times continuously
differentiable in $x$. There 
  exist  constants $K_{l}$, $l=0,...,m+1$, such that for all values
of indices and arguments  we have
$$
|D^{l}a^{\alpha\beta}_{t}|\leq K_{l},\quad l\leq  m,\quad
|D^{l}b^{\alpha}_{t}|_{l_{2}}\leq K_{l},\quad l\leq m+1.
$$

(ii) There is a constant $\kappa>0$ such that 
for all $(\omega,t,x)\in\Omega\times H_T$ and $z\in\bR^{d}$
$$
\sum_{i,j=1}^{d}(2a^{ij}_{t}-b^{i\rho}_{t} b^{j\rho}_{t})z^iz^j
\geq\kappa |z|^2.
$$

\end{assumption}

\begin{assumption}
                                   \label{assumption 5.19.1}
We have $u_{0}\in L_{2}(\Omega,\cF_{0},W^{m+1}_{2})$.
The function   $f_{t}$ 
 is  $W^{m}_{2}$-valued, $g^{\rho}_{t}$,
$ \rho =1,2,...$, are $W^{m+1}_{2}$-valued  
functions 
given on $\Omega\times[0,T]$ and they
 are  predictable. Moreover,
for $ g_{t}
:=(g^{\rho}_{t} )_{\rho=1}^{\infty }$ and 
$$
\|g_{t}\|^{2}_{W^{l}_{2}}:=
\sum_{\rho=1}^{\infty}\|g^{\rho}_{t}\|^{2}_{W^{l}_{2}}
$$
we have
$$
E\int_{0}^{T}(\|f_{t}\|^{2}_{W^{m}_{2}}
+\|g_{t}\|^{2}_{W^{m+1}_{2}})\,dt+E\|u_{0}\|^{2}_{W^{m+1}_{2}}
=:\cK^{2}_{m}<\infty.
$$

\end{assumption}
 
\begin{remark}                                     \label{remark 1.27.10.9}
If Assumption \ref{assumption 5.19.1} 
holds with $m>d/2$, 
then by Sobolev's embedding of $W^m_2$ into $C_b$, 
the space of bounded continuous functions, for almost all 
$\omega$ we can find a continuous function of $x$ which 
equals to $u_0$ almost everywhere. Furthermore, 
for each $t$ and $\omega$ we have 
continuous functions of $x$ which coincide with  $f_t$ and $g_t$, 
for almost every $x\in\bR^d$. Therefore when 
Assumption \ref{assumption 5.19.1} holds 
with $m>d/2$, we always assume that $u_0$, $f_t$ and $g_t$ 
are continuous in $x$ for all $t$. 
\end{remark} 
 
The solutions of \eqref{3.25.4.9} will be looked for
in the Hilbert space 
$$
\bW^{m+2}_{2}(T)=L_{2}(\Omega
\times[0,T],\cP,W^{m+2}_{2}).
$$
 One knows,   see e.g., \cite{KR} or \cite{R}, how to define
stochastic integrals of
Hilbert-space valued processes and equation \eqref{3.25.4.9} 
is understood accordingly. Observe that since $u_{0}
\in L_{2}(\Omega,\cF_{0},W^{m}_{2})$ the solutions
of \eqref{3.25.4.9} automatically  are continuous
$W^{m}_{2}$-valued processes (a.s.).
  
We are going to use the following classical result (see,
for instance, 
 Theorem 5.1, Remark 5.6, and Theorem 7.1 of \cite{Kr99}).

\begin{theorem}
                                            \label{theorem 5.25.1}
Under the above assumptions
 there exists a unique solution $u\in\bW^{m+2}_{2}(T)$
of \eqref{3.25.4.9} with initial condition $u_{0}$.
Furthermore, with probability one the function
$u_{t}$ is a continuous $W^{m+1}_{2}$-valued function
and there exists a constant $N$ depending only on $T$, $d,\kappa$,
$m$, and $K_{l},l\leq m+1$, such that
$$
E\sup_{t\leq T}\|u_{t}\|^{2}_{W^{m+1}_{2}}
+E\int_{0}^{T}\|u_{t}\|^{2}_{W^{m+2}_{2}}\,dt
\leq N\cK^{ 2}_{m}.
$$
\end{theorem}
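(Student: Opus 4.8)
Since the statement is quoted from the $L_2$-theory of \cite{Kr99}, I would organize the argument into an a priori estimate, uniqueness, and existence, built on the Gelfand triple $W^{m+2}_2\hookrightarrow W^{m+1}_2\hookrightarrow W^{m}_2$ suggested by the formulation: the solution lives in $W^{m+2}_2$ in the mean-square sense, is continuous in the pivot space $W^{m+1}_2$, and loses two derivatives under $\cL_t$. The preliminary observation is that, by Assumption \ref{assumption 5.19.2}(i), for each $(\omega,t)$ the operator $\cL_t$ maps $W^{m+2}_2$ boundedly into $W^{m}_2$ and $\cM^\rho_t$ maps $W^{m+2}_2$ boundedly into $W^{m+1}_2$, with norms controlled by the constants $K_l$.

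The core is the a priori estimate, obtained by applying It\^o's formula to $\|u_t\|^2_{W^{m+1}_2}$. This produces a drift built from $2(\cL_tu_t,u_t)_{W^{m+1}_2}$, the cross term $2(f_t,u_t)_{W^{m+1}_2}$, and the It\^o correction $\sum_\rho\|\cM^\rho_tu_t+g^\rho_t\|^2_{W^{m+1}_2}$, together with a local martingale. The decisive step is a coercivity inequality. Integrating by parts and collecting the top-order terms, the second-order part of $2(\cL_tu_t,u_t)_{W^{m+1}_2}$ contributes $-2\int a^{ij}_t\,D_iD^{m+1}u_t\,D_jD^{m+1}u_t\,dx$ up to lower-order commutators, while the quadratic variation $\sum_\rho\|\cM^\rho_tu_t\|^2_{W^{m+1}_2}$ contributes $+\int b^{i\rho}_tb^{j\rho}_t\,D_iD^{m+1}u_t\,D_jD^{m+1}u_t\,dx$ up to lower-order terms. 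Adding these, the leading contributions combine into $-\int(2a^{ij}_t-b^{i\rho}_tb^{j\rho}_t)\,D_iD^{m+1}u_t\,D_jD^{m+1}u_t\,dx$, which by the super-parabolicity Assumption \ref{assumption 5.19.2}(ii) is at most $-\kappa\|D^{m+2}u_t\|^2_{L_2}$. This is precisely the mechanism by which the stochastic first-order term is absorbed into the deterministic second-order term, and it is the one place where condition (ii), rather than mere parabolicity of $\cL_t$, is essential.

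With this dissipative term in hand, I would bound every remaining contribution — the commutators generated by differentiating the variable coefficients and the cross terms with $f_t$ and $g_t$ — by Cauchy--Schwarz and Young's inequality, absorbing a small multiple of $\|D^{m+2}u_t\|^2_{L_2}$ into the good term and leaving a reservoir of the form $N\|u_t\|^2_{W^{m+1}_2}+N(\|f_t\|^2_{W^{m}_2}+\|g_t\|^2_{W^{m+1}_2})$. Taking expectations removes the martingale, and Gronwall's lemma yields the bounds on $E\int_0^T\|u_t\|^2_{W^{m+2}_2}\,dt$ and $\sup_{t\leq T}E\|u_t\|^2_{W^{m+1}_2}$ in terms of $\cK^2_m$; the supremum estimate $E\sup_{t\leq T}\|u_t\|^2_{W^{m+1}_2}$ then follows from the Burkholder--Davis--Gundy inequality applied to the martingale part, after a further Young splitting. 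Applying the same estimate to the difference of two solutions with $f=g=u_0=0$ gives uniqueness.

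For existence I would run a Galerkin scheme: project the equation onto finite-dimensional subspaces of $W^{m+2}_2$, solve the resulting finite system of It\^o equations, observe that the a priori estimate above holds uniformly in the discretization parameter, and extract a weak limit in $\bW^{m+2}_2(T)$, identifying it as a solution via the coercivity/monotonicity structure; the continuity of $u_t$ as a $W^{m+1}_2$-valued process comes from the It\^o formula for $\|u_t\|^2_{W^{m+1}_2}$ once membership in $\bW^{m+2}_2(T)$ is known. The main obstacle throughout is the top-order cancellation in the coercivity step: one must track the commutators of $D^{m+1}$ with the coefficients $a^{\alpha\beta}_t$ and $b^{\alpha\rho}_t$ carefully enough to see that the only surviving leading-order contribution is the quadratic form built from $2a^{ij}_t-b^{i\rho}_tb^{j\rho}_t$, since this is exactly the quantity that Assumption \ref{assumption 5.19.2}(ii) is designed to control.
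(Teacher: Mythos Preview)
Your sketch is a correct outline of the standard $L_2$-theory proof, but note that the paper itself does not prove this theorem: it is stated as a classical result and simply referenced to Theorem~5.1, Remark~5.6, and Theorem~7.1 of \cite{Kr99}. What you have written is essentially a summary of the argument one finds in that reference (or in \cite{KR}, \cite{R}): the coercivity inequality in the Gelfand triple $W^{m+2}_2\hookrightarrow W^{m+1}_2\hookrightarrow W^{m}_2$ driven by the super-parabolicity condition $2a^{ij}-b^{i\rho}b^{j\rho}\geq\kappa I$, the It\^o formula plus Gronwall for the a priori estimate, BDG for the supremum in time, and Galerkin (or the method of continuity) for existence. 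So there is no discrepancy in approach to report; you have supplied a proof where the paper supplies a citation.
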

\begin{remark}
                                            \label{remark 5.25.1}
In the future we are going to assume that
$m+1>d/2$. Then by Sobolev embedding theorems the solution
$u_{t}(x)$ from Theorem \ref{theorem 5.25.1} is a continuous
function of $(t,x)$ (a.s). More precisely, with probability
one, for any $t$ one can find a continuous function
of $x$ which equals $u_{t}(x)$ for almost all $x$
and, in addition, the so constructed modification is continuous with
respect to the couple $(t,x)$.
 \end{remark}

We are interested in approximating the solution
by means of solving a semidiscretized version of
\eqref{3.25.4.9} when partial derivatives are replaced
with finite differences. 
For $\lambda=0$ set $ \delta_{h,\lambda} $ to be
the unit operator and for the other values of $\lambda
\in\bR^{d}$ let
$$
 \delta_{h,\lambda}u(x)=\frac{u(x+h\lambda)
-u(x)}{h}\quad
 \text{for $h\in\bR\setminus\{0\}$} .
$$
We draw the reader's attention to the fact that
$h$ can be of any sign. This will be important in the future.

To introduce difference equations
we take a finite set $\Lambda\subset\bR^{d}$
containing the origin, and consider the equation
\begin{equation}                             \label{5.20.3}
du^{h}_{t}=(
L^{h}_{t}u^{h}_{t}
+f_{t} )\,dt
+(M^{h,\rho}_{t}u^{h}_{t}
+g^{\rho}_{t})\,dw^{\rho}_{t},
\end{equation}
with  
$$
L^{h}_{t}\phi =
\ga^{\lambda\mu}_{t}\delta_{h,\lambda}\delta_{-h,\mu}\phi,
\quad
M^{h,\rho}_{t}\phi=\gb^{\lambda \rho}_{t}\delta_{h,\lambda}
\phi,
$$
where the summation is performed over
$\lambda,\mu\in\Lambda$ and in \eqref{5.20.3}
also with respect to $\rho=1,2,\dots$. 
Assume that, for $\lambda,\mu\in \Lambda$,  
$\ga^{\lambda\mu}=\ga^{\lambda\mu}_{t}(x)$
 are real-valued and 
$\gb^{\lambda }
=\gb^{\lambda }_{t}(x)
=(\gb^{\lambda \rho}_{t}(x) )_{\rho=1}^{\infty} $   
are $l_{2}$-valued
functions on $\Omega\times H_{T}$,
 measurable with respect to $\cP\times\cB(\bR^d)$. 

 Set $\Lambda_{0}:=\Lambda\setminus\{0\}$. 
Let  $\frm\geq0$ be an
integer. Set  
$ \bar \frm=\max(\frm,1) $, 
and let $A_0$, $A_1$,..., 
$A_{\bar \frm}$ be some constants. 
The functions $\ga$ and $\gb$ 
are supposed to possess the following properties. 

\begin{assumption}
                                        \label{assumption 5.19.3}

(i) For each
$(\omega,t)$ and $\lambda,\mu\in\Lambda_0$ and $\nu
\in\Lambda$, $\ga^{\lambda\mu}_{t}$  are 
 $\bar \frm$  
 times continuously
differentiable in $x$, 
 $\ga^{0\nu}_{t}$, $\ga^{\nu0}_{t}$ 
are $ \frm $ 
times continuously differentiable in $x$ and 
 $\gb^{\nu}_{t}$   are  
 $ \frm $  
 times continuously
differentiable in $x$ as $l_{2}$-valued functions. 
For all values  
of   arguments  we have 
$$
|D^{j}\ga^{\lambda\mu}_{t}|\leq A_{j},\quad
  \lambda,\mu\in\Lambda_0, 
\quad j\leq 
 \bar \frm , 
$$
$$
 |D^{j}\ga^{\lambda0}_{t}|\leq A_{j},\quad
|D^{j}\ga^{0\lambda}_{t}|\leq A_{j}, 
\quad 
 |D^{j}\gb^{\lambda }_{t}|_{l_{2}}\leq A_{j},
 \quad \lambda\in\Lambda,
 \quad j\leq  \frm .
$$

(ii) For all $(\omega,t,x)\in\Omega\times H_T$ and numbers
$z_{\lambda},\lambda\in\Lambda_{0}$,
we have
$$
\sum_{\lambda,\mu\in\Lambda_{0}}
(2\ga^{\lambda  \mu}_{t}-\gb^{\lambda \rho}_{t} \gb^{\mu \rho}_{t})
z_\lambda z_\mu
\geq\kappa \sum_{\lambda\in\Lambda_{0}}z_{\lambda}^2.
$$
\end{assumption}

Introduce 
$$
\bG_{h}=\{ \lambda_{1}h+... +
\lambda_{n}h:n=1,2,...,
 \lambda_{i}\in\Lambda\cup(-\Lambda)\}
$$
and let $l_{2}(\bG_{h})$ be the set of real-valued
functions $u$
on $\bG_{h}$ such that
$$
|u|_{l_{2}(\bG_{h})}^{2}:=|h|^{d}\sum_{x\in\bG_{h}}|u(x)|^{2}
<\infty.
$$
The notation $l_{2}(\bG_{h})$ will also be used for 
$l_{2}$-valued functions like $g$.
\begin{remark}
                                             \label{remark 5.25.2}

Observe that,
under Assumption \ref{assumption 5.19.3} (i), equation \eqref{5.20.3} is
an ordinary It\^o equation  with Lipschitz continuous coefficients for
$l_{2}(\bG_{h})$-valued processes.  Therefore if,
for instance, (a.s.)
$$
\int_{0}^{T}(|f_{t}|_{l_{2}(\bG_{h})}^{2}+
|g_{t}|_{l_{2}(\bG_{h})}^{2})\,dt<\infty,
$$
 and  
Assumption \ref{assumption 5.19.3} (i) holds  
then equation \eqref{5.20.3} has a unique solution
with continuous trajectories in $l_{2}(\bG_{h})$ 
provided that the initial data $u^{h}_{0}\in l_{2}(\bG_{h})$
(a.s.).
\end{remark}

For equation \eqref{5.20.3} to be consistent with
\eqref{3.25.4.9} we impose the following.

\begin{assumption}
                                        \label{assumption 5.22.1}
For all $i,j=1,...,d$ and $ \rho =1,2,...$
$$
\sum_{\lambda,\mu\in\Lambda_{0}}\ga^{\lambda\mu}_{t}\lambda^{i}
\mu^{j}= a^{ij}_{t},
\quad \sum_{\lambda\in\Lambda_{0}}\ga^{\lambda0}_{t}\lambda^{i}
+\sum_{\mu\in\Lambda_{0}}\ga^{0\mu}_{t}\mu^{i}
=
a^{i0}_{t}+a^{0i}_{t}, \quad\ga^{00}_{t}=a^{00}_{t},
$$
$$
 \sum_{\lambda \in\Lambda_{0}}
\gb^{\lambda \rho}_{t}\lambda^{i}=b^{i\rho}_{t},
\quad \gb^{0\rho}_{t}=b^{0\rho}_{t}.
$$

\end{assumption}

 \begin{remark}
Clearly, if 
$$
a_{t}^{ij}=\sum_{\lambda,\mu\in\Lambda_{0}}\ga^{\lambda\mu}_{t}\lambda^{i}
\mu^{j}, \quad i,j=1,...,d
$$
is an invertible matrix for some  $\omega$, $t$, $x$, then 
$\Lambda_0$ spans the whole $\bR^d$. On the other hand, if 
$\Lambda_0$ spans $\bR^d$, then clearly a constant $\kappa'>0$ 
exists such that 
$$
\sum_{\lambda\in\Lambda_0}|\sum_{i}z^i\lambda^i|^2\geq\kappa'|z|^2,
\quad \text{for all $z=(z^1,...,z^d)\in\bR^d$}, 
$$
and therefore  
Assumptions \ref{assumption 5.19.3} (ii) 
and \ref{assumption 5.22.1} imply Assumption 
\ref{assumption 5.19.2} (ii). It is not hard to see
that  
Assumptions \ref{assumption 5.19.2} (ii) and
\ref{assumption 5.22.1} 
do not imply 
Assumption \ref{assumption 5.19.3} (ii), 
in general, unless $\Lambda_0$
is a basis in 
$\bR^d$. 
\end{remark} 

There are several ways to construct appropriate $\ga$ and $\gb$. 

\begin{example}
                                            \label{example 5.22.1}
The most natural, albeit sometimes
not optimal,  way to choose $\ga$ and $\gb$
is to set $\Lambda =\{e_{0}, e_{1},...,  e_{d}\}$, where $e_{0}=0$
and $e_{i}$ is the $i$th basis vector  in $\bR^d$  
and  let 
$$
\ga^{e_{\alpha} e_{\beta}}_{t}= a^{\alpha\beta}_{t},\quad
\gb^{e_{\alpha}\rho }_{t}=b^{\alpha \rho}_{t},
\quad \alpha,\beta=0,1,...,d.
$$
 Thus, in \eqref{5.20.3} 
 the first order derivatives in 
 \eqref{3.25.4.9} are approximated 
by usual finite differences and
\begin{equation}
                                                              \label{5.26.3}
\sum_{\lambda,\mu\in\Lambda_{0}}\ga^{\lambda\mu}_{t}
\delta_{h,\lambda}\delta_{-h,\mu} u=-
 a^{ij}_{t}\delta_{h,e_{i}}\delta_{h,-e_{j}}u,
\end{equation}
which is a standard finite-difference approximation
 of $a^{ij}_{t}D_{i}D_{j}u$. Also notice that
$$
\sum_{\lambda,\mu\in\Lambda_{0}}
\ga^{\lambda\mu}_{t} 
z_\lambda z_\mu= 
a^{ij}_{t}z_{e_{i}}z_{e_{j}},\quad
\sum_{\lambda\in\Lambda_{0}}
\gb^{\lambda \rho}_{t}z_{\lambda}=b^{i\rho}_{t}
z_{e_{i}}.
$$
It follows that $\ga$ and $\gb$ satisfy 
the above assumptions 
as long as $a$ and
$b$ do.

\end{example}
\begin{example}
                                     \label{example 5.22.2}

The second choice  is to use  symmetric
finite differences to approximate the
first-order derivatives. Namely, we take 
$\Lambda_{0}=\{\pm e_{1},...,\pm e_{d}\}$ and
$$
\ga^{0,\pm e_{i} }_{t}
=\ga^{\pm e_{i},0 }_{t}
=\pm(1/4)(a^{0i }_{t}+a^{i0}_{t}),\quad
\gb^{\pm e_{i},\rho}_{t}=\pm(1/2)b^{i,\rho}_{t},
$$
$$
\ga^{00}_{t}=a^{00}_{t},\quad \gb^{0\rho}_{t}=b^{0\rho}_{t},
$$
so that, for instance,
$$
\sum_{\lambda\in\Lambda_{0}}\gb^{\lambda \rho}_{t}\delta_{h,\lambda}
u(x)=\sum_{i=1}^{d}b^{i\rho}_{t}\frac{u(x+he_{i})-u(x-he_{i})}{2h}.
$$
For $\lambda,\mu\in\Lambda_{0}$ we define $\ga^{\lambda\mu}_{t}$
by
$$
\ga^{\pm e_{i},\pm e_{j}}_{t}
=\tfrac{1}{2}a^{ij}_{t},\quad 
\ga^{\pm e_{i},\mp e_{j}}_{t}=0.
$$
Then Assumption \ref{assumption 5.22.1} is satisfied
and formula \eqref{5.26.3} holds again ($a^{ij}=a^{ji}$).
If Assumption \ref{assumption 5.19.2} (ii) is satisfied, then
for any numbers $z_{\lambda}$
$$
\sum_{\lambda,\mu\in\Lambda_{0}}
(2\ga^{\lambda\mu}_{t}-\gb^{\lambda \rho}_{t}
\gb^{\mu \rho}_{t}) z_\lambda
z_\mu=\sum_{i,j=1}^{d}a^{ij}_{t}z_{e_{i}}z_{e_{j}}
+\sum_{i,j=1}^{d}a^{ij}_{t}z_{-e_{i}}z_{-e_{j}}
$$
$$
-(1/4)\sum_{\rho}\big|\sum_{i=1}^{d}b^{i\rho}_{t}z_{e_{i}}
-\sum_{i=1}^{d}b^{i\rho}_{t}z_{-e_{i}}\big|^{2}
\geq \sum_{i,j=1}^{d}a^{ij}_{t}z_{e_{i}}z_{e_{j}}
-(1/2)\sum_{\rho}\big|\sum_{i=1}^{d}b^{i\rho}_{t}z_{e_{i}} \big|^{2}
$$
$$
+\sum_{i,j=1}^{d}a^{ij}_{t}z_{-e_{i}}z_{-e_{j}}
-(1/2)\sum_{\rho}\big|\sum_{i=1}^{d}b^{i\rho}_{t}z_{-e_{i}}\big|^{2}
$$
$$
\geq \kappa\sum_{i=1}^{d}z_{e_{i}}^{2}+
\kappa\sum_{i=1}^{d}z_{-e_{i}}^{2}=\kappa
\sum_{\lambda\in\Lambda_{0}}z_{\lambda}^{2},
$$
so that Assumption  
 \ref{assumption 5.19.3}  (ii) is also
satisfied.   By comparing Theorems \ref{theorem 5.25.3} 
and \ref{theorem 5.8.5.9}
and also definitions \eqref{11.25.11.08} and \eqref{12.25.11.08}  
for approximations 
$\bar u^h$ and $\tilde u^h$ below, notice that 
the above choice of $\ga$ and $\gb$ is better 
than that of the previous example,  in the sense that for 
$\tilde u^h$ we have fewer terms to calculate than for $\bar u^h$ 
to get the same order of 
accuracy of the approximations.  

\end{example}

Our results revolve about the possibility to prove
the existence of random processes $u^{(j)}_{t}(x)$, $t\in[0,T],
x\in\bR^{d}$, 
  $j=0,\dots,k$, for some integer $k\ge0$ 
such that 
they are independent of $h$, $u^{(0)}$ is the solution of
\eqref{3.25.4.9} with initial value $u_{0}$  and  
almost surely  we have
 
\begin{equation}                                                                                           
                                             \label{1.26.4.9} 
u^h_{t}(x)=\sum_{j=0}^{k}\frac{h^{j}}{j!} u^{(j)}_{t}(x)
+ R ^h_{t}(x)
\end{equation}
  for  $h\ne0$  and for all $t\in[0,T]$ 
and $x\in\mathbb G_h$, 
where
$u^h_{t}$  is the solution to \eqref{5.20.3} with initial data
$u_{0}$ and 
$ R ^h$ is a continuous $l_2(\mathbb G_h)$-valued adapted 
process, such that 
\begin{equation}
                                                  \label{5.25.4}
E\sup_{t\in[0,T]}\sup_{x\in\bG_h}|R^h_{t}( x)|^2\leq 
Nh^{2(k+1)}\cK_m^2 
\end{equation}
with a constant $N$ independent of $h$.

\begin{theorem}
                                       \label{theorem 5.25.11}
 Let Assumptions 
  \ref{assumption 5.19.2}, 
 \ref{assumption 5.19.1}, 
 \ref{assumption 5.19.3} 
 and 
 \ref{assumption 5.22.1}  
 hold  with 
$$
\frm=m>k+1+d/2,
$$
where $k \geq0$ is an integer.  
Then expansion \eqref{1.26.4.9}   and estimate 
\eqref {5.25.4} hold with a constant $N$  
depending only on $\Lambda$, $d$, $m$, $K_0$, ..., $K_{m+1}$, 
$A_{0},...,A_{ m }$,
$\kappa$, and $T$. 
\end{theorem}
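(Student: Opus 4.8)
The plan is to prove, instead of the grid statement directly, the more general expansion on all of $\bR^d$ announced in the introduction, and only at the end restrict to $\bG_h$ and pass to the supremum norm by Sobolev embedding. So I regard \eqref{5.20.3} as an equation for functions on $[0,T]\times\bR^d$ with initial value $u_0$. The first step is to expand the difference operators in powers of $h$. For smooth $v$, Taylor's formula gives $\delta_{h,\lambda}v=\sum_{n=0}^{N}\tfrac{h^n}{(n+1)!}(\lambda\cdot D)^{n+1}v+h^{N+1}r^{\lambda}_{h,N}[v]$, and similarly for $\delta_{-h,\mu}$, where the remainders are bounded, uniformly in $h$, by norms of $D^{N+2}v$. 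Multiplying out produces differential operators $\cL^{(n)}_t$ and $\cM^{(n),\rho}_t$ of orders $n+2$ and $n+1$, with $\cL^{(0)}_t=\cL_t$ and $\cM^{(0),\rho}_t=\cM^{\rho}_t$ by the consistency Assumption \ref{assumption 5.22.1}; their coefficients are built from $\ga,\gb$ and inherit the bounds of Assumption \ref{assumption 5.19.3}(i). Thus $L^h_t$ and $M^{h,\rho}_t$ equal $\sum_{n\le N}h^n\cL^{(n)}_t$ and $\sum_{n\le N}h^n\cM^{(n),\rho}_t$ up to explicit Taylor remainders of size $h^{N+1}$.

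Next I would define the coefficient processes $u^{(j)}$ by matching powers of $h$. Substituting the ansatz $\sum_{j}\tfrac{h^j}{j!}u^{(j)}$ into \eqref{5.20.3} and equating the coefficient of $h^j$ forces $u^{(0)}$ to solve \eqref{3.25.4.9} with initial value $u_0$, and forces each $u^{(j)}$, $1\le j\le k$, to solve
\begin{equation*}
du^{(j)}_t=(\cL_t u^{(j)}_t+F^{(j)}_t)\,dt+(\cM^{\rho}_t u^{(j)}_t+G^{(j),\rho}_t)\,dw^{\rho}_t,\qquad u^{(j)}_0=0,
\end{equation*}
where $F^{(j)}=\sum_{i<j}\tfrac{j!}{i!}\cL^{(j-i)}_t u^{(i)}$ and $G^{(j),\rho}=\sum_{i<j}\tfrac{j!}{i!}\cM^{(j-i),\rho}_t u^{(i)}$ depend only on the lower order terms. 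Each of these has the same principal part, so Theorem \ref{theorem 5.25.1} applies once the data are regular enough. A short induction on that theorem shows that, because $\cL^{(j-i)}$ has order $j-i+2$ acting on $u^{(i)}$, the data $F^{(j)}$ and $G^{(j),\rho}$ land in $W^{m-j}_2$ and $W^{m+1-j}_2$ independently of $i$; hence each level costs exactly one Sobolev derivative, and $u^{(j)}$ is a continuous $W^{m+1-j}_2$-valued process with $E\int_0^T\|u^{(j)}_t\|^2_{W^{m+2-j}_2}\,dt\le N\cK^2_m$. Since $m>k+1+d/2$, all of $u^{(0)},\dots,u^{(k)}$ are well defined, independent of $h$, and still have several derivatives to spare.

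The heart of the matter is the remainder $R^h:=u^h-\sum_{j=0}^{k}\tfrac{h^j}{j!}u^{(j)}$. By construction the powers $h^0,\dots,h^k$ cancel, so $R^h$ solves the discrete equation with its own coefficients,
\begin{equation*}
dR^h_t=(L^h_t R^h_t+\Phi^h_t)\,dt+(M^{h,\rho}_t R^h_t+\Psi^{h,\rho}_t)\,dw^{\rho}_t,\qquad R^h_0=0,
\end{equation*}
where $\Phi^h,\Psi^h$ collect the tail powers $h^{k+1},h^{k+2},\dots$ of the operator expansions and the Taylor remainders, all applied to $u^{(0)},\dots,u^{(k)}$. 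The derivative count above gives $\|\Phi^h_t\|_{W^{l}_2}+\|\Psi^h_t\|_{W^{l+1}_2}\le Nh^{k+1}(\cdots)$ with $l$ a Sobolev index satisfying $l>d/2$, the right-hand side being square-integrable over $\Omega\times[0,T]$ with size $\cK_m$; this is precisely where the budget $m>k+1+d/2$ is consumed. The hard part will be to establish the required Sobolev a priori estimate for the discrete scheme \emph{uniformly in $h$}: applying It\^o's formula to $\|R^h_t\|^2_{W^{l}_2}$ and summing by parts produces the quadratic form $2\langle R^h,L^h R^h\rangle+\sum_{\rho}\|M^{h,\rho}R^h\|^2$, and the discrete coercivity Assumption \ref{assumption 5.19.3}(ii) is exactly what bounds this form above by $-\kappa\,|\text{first differences of }R^h|^2+N\|R^h\|^2_{W^{l}_2}$, with $N$ free of $h$. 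The commutators of $\delta_{h,\lambda}$ with the variable coefficients, arising when the equation is differenced $l$ times, are handled by a discrete Leibniz rule and contribute only lower order terms controlled by Assumption \ref{assumption 5.19.3}(i). Gronwall's lemma together with the Burkholder--Davis--Gundy inequality then yields $E\sup_{t\le T}\|R^h_t\|^2_{W^{l}_2}\le Nh^{2(k+1)}\cK^2_m$.

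Finally, since the whole construction lives on $\bR^d$, Sobolev's embedding $W^{l}_2\hookrightarrow C_b$ (valid as $l>d/2$) converts this into $E\sup_{t\le T}\sup_{x\in\bR^d}|R^h_t(x)|^2\le Nh^{2(k+1)}\cK^2_m$; restricting to $x\in\bG_h$ and using the $u^{(j)}$ just built gives the expansion \eqref{1.26.4.9} together with \eqref{5.25.4}. I expect the uniform-in-$h$ discrete energy estimate --- keeping the gain from the coercivity while absorbing the difference-quotient commutators --- to be the genuinely delicate step, and it is exactly the point at which Assumption \ref{assumption 5.19.3} is used in full.
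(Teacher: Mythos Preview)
Your proposal is correct and follows essentially the same route as the paper: Taylor-expand $L^h_t,M^{h,\rho}_t$ into $\sum h^n\cL^{(n)}_t,\sum h^n\cM^{(n),\rho}_t$, define $u^{(j)}$ recursively via Theorem~\ref{theorem 5.25.1} losing one Sobolev index per step, show the remainder solves the discrete scheme with $O(h^{k+1})$ forcing, apply a uniform-in-$h$ $W^l_2$ energy estimate based on Assumption~\ref{assumption 5.19.3}(ii), and finish with Sobolev embedding. The only step you leave implicit is the identification of the $L_2(\bR^d)$-valued solution of \eqref{5.20.3} with the $l_2(\bG_h)$-valued solution on the grid, which the paper carries out by a short mollification/uniqueness argument; also, the paper lifts the energy estimate to $W^l_2$ by applying $D_i$ rather than $\delta_{h,\lambda}$, which lets it handle the commutator term $\tilde\ga^{\lambda\mu}\delta_{h,\lambda}\delta_{-h,\mu}u^h$ via $\|\delta_{-h,\mu}\phi\|_{L_2}\le\|\partial_\mu\phi\|_{L_2}$ without needing an extra derivative of $\ga$.
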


\begin{remark}                                                  \label{remark 4.20.10.9}
Actually $u^{h}_{t}(x)$ is defined for all $x\in\bR^{d}$
rather than only on $\bG_{h}$ and, as we will 
see from the proof of Theorem
\ref{theorem 5.25.11}, one can replace $\bG_{h}$ in \eqref{5.25.4}
with $\bR^{d}$. 
\end{remark}

\begin{remark}
Let $\Lambda_0$ be a basis in $\bR^d$ such that 
Assumption  \ref{assumption 5.22.1} holds. Then 
Assumption \ref{assumption 5.19.2} (i) implies 
Assumption \ref{assumption 5.19.3} (i), 
and Assumption \ref{assumption 5.19.2} (ii) implies 
Assumption \ref{assumption 5.19.3} (ii)  with
$\frm=m$. 
Thus  if Assumptions \ref{assumption 5.19.2} and 
 \ref{assumption 5.19.1} hold with 
$$
m>k+1+d/2,  
$$
then 
the conditions  of 
Theorem \ref{theorem 5.25.11} are satisfied. 
\end{remark}

Equality \eqref{1.26.4.9} clearly yields 
$$
\delta_{h,\lambda}u^h_{t}(x)=\sum_{j=0}^{k}\frac{h^{j}}{j!} 
\delta_{h,\lambda}u^{(j)}_{t}(x)
+\delta_{h,\lambda}R^h_{t}(x)
$$
for any $\lambda=(\lambda_1,...,\lambda_n)\in \Lambda^n$ and 
integer $n\geq0$, where $\Lambda^0=\{0\}$ and 
 $$
 \delta_{h,\lambda}:=\delta_{h,\lambda_1}\cdot...\cdot\delta_{h,\lambda_n}. 
 $$
 
 Theorem \ref{theorem 5.25.11} can be generalised 
as follows. 

\begin{theorem}                                  \label{theorem 1.25.10.9}
 Let the conditions 
of Theorem \ref{theorem 5.25.11} hold 
with 
$$
\frm=m>k+n+1+d/2 
$$
for some integers $k\geq0$ and $n\geq0$.  Then expansion 
\eqref{1.26.4.9} holds and
for $\lambda =(\lambda_1,...,\lambda_n) \in\Lambda^n$ 
$$
E\sup_{t\in[0,T]}\sup_{x\in\bG_h}|\delta_{h,\lambda}R^h_{t}( x)|^2
+
E\sup_{t\in[0,T]}\sum_{x\in\bG_h}|\delta_{h,\lambda}R^h_{t}( x)|^{2}
|h|^{d} 
\leq 
Nh^{2(k+1)}\cK_m^2, 
$$
where $N$ depends only on  $\Lambda$, $d$, $m$, 
$K_0$, ..., $K_{m+1}$, $A_{0},...,A_{ m }$,
$\kappa$ and $T$.
 
\end{theorem}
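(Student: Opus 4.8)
The plan is to reduce both estimates to a single full-space Sobolev bound on the remainder that already underlies Theorem \ref{theorem 5.25.11}. Recall from Remark \ref{remark 4.20.10.9} that $R^h_t$ is in fact defined on all of $\bR^d$, and that the proof of Theorem \ref{theorem 5.25.11} runs through its $L_2$-version, Theorem \ref{theorem 5.29.1}, which controls the remainder in a Sobolev norm over $\bR^d$ rather than merely in the supremum norm. Since the present hypothesis $m>k+n+1+d/2$ exceeds the one in Theorem \ref{theorem 5.25.11} by exactly $n$ derivatives, that same argument delivers
$$
E\sup_{t\le T}\|R^h_t\|^2_{W^{m-k-1}_2}\le Nh^{2(k+1)}\cK^2_m,
$$
now with $m-k-1>n+d/2$. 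This is the only analytic input I would use; the expansion itself has already been recorded in the identity displayed just before the theorem, so only the estimate on $\delta_{h,\lambda}R^h$ remains.

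The first new ingredient is a commutation estimate showing that each finite difference costs exactly one Sobolev derivative, uniformly in $h$. For $\lambda\in\Lambda\setminus\{0\}$ and smooth $v$ I would write $\delta_{h,\lambda}v(x)=\int_0^1\lambda^iD_iv(x+\theta h\lambda)\,d\theta$; Minkowski's integral inequality together with the translation invariance of the Sobolev norms then gives $\|\delta_{h,\lambda}v\|_{W^s_2}\le|\lambda|\,\|v\|_{W^{s+1}_2}$ with a constant independent of $h$ (for $\lambda=0$ the operator is the identity and costs nothing). Iterating over the $n$ factors of $\delta_{h,\lambda}=\delta_{h,\lambda_1}\cdots\delta_{h,\lambda_n}$ yields
$$
\|\delta_{h,\lambda}R^h_t\|_{W^s_2}\le C(\Lambda,n)\,\|R^h_t\|_{W^{s+n}_2},\qquad\lambda\in\Lambda^n.
$$

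I would then fix an integer $s$ with $d/2<s\le m-k-1-n$, which is possible precisely because $m>k+n+1+d/2$. For the supremum term, Sobolev's embedding $W^s_2\hookrightarrow C_b$ combined with the commutation estimate gives $\sup_{x\in\bR^d}|\delta_{h,\lambda}R^h_t(x)|^2\le C\|\delta_{h,\lambda}R^h_t\|^2_{W^s_2}\le C\|R^h_t\|^2_{W^{m-k-1}_2}$, since $s+n\le m-k-1$; applying $E\sup_t$ and the displayed remainder bound closes this part. For the discrete $l_2(\bG_h)$ term I would instead invoke a discrete Sobolev inequality $|h|^d\sum_{x\in\bG_h}|w(x)|^2\le N\|w\|^2_{W^s_2}$, valid for $s>d/2$ with $N=N(d,\Lambda)$ uniform in $|h|\le1$, applied with $w=\delta_{h,\lambda}R^h_t$ and followed by the same commutation step and remainder bound. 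This inequality I would prove by covering the uniformly discrete grid $\bG_h$ by cubes of side comparable to $|h|$ and applying the rescaled embedding on each cube with bounded overlap.

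The routine part is the bookkeeping of these two embeddings. The step I expect to require the most care is the uniform-in-$h$ discrete Sobolev inequality, since one must pass from grid sums to integrals without letting the constant blow up as $h\to0$; this relies on $\bG_h$ being uniformly discrete at scale $|h|$ and on selecting the local cubes with bounded overlap. Once that comparison and the commutation bound are in place, both desired estimates follow simply by accounting for the $n$ derivatives absorbed by $\delta_{h,\lambda}$, which is exactly the reason for strengthening the hypothesis from $m>k+1+d/2$ to $m>k+n+1+d/2$.
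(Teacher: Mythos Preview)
Your proposal is correct and follows essentially the same route as the paper: the paper packages your commutation estimate as Remark~\ref{remark 5.29.1}, your discrete Sobolev inequality as Lemma~\ref{lemma 2.27.4.9}, and combines them with Theorem~\ref{theorem 5.29.1} exactly as you describe (Corollary~\ref{corollary 1.12.10.9}). The only additional step the paper spells out explicitly is the identification of the $l_2(\bG_h)$-valued solution $u^h$ with the restriction to $\bG_h$ of the $L_2(\bR^d)$-valued solution $v^h$, which you are implicitly importing from the proof of Theorem~\ref{theorem 5.25.11} via Remark~\ref{remark 4.20.10.9}.
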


We prove Theorem  \ref{theorem 1.25.10.9} 
in Section  \ref{section proof} 
after some preliminaries presented in Section 
 \ref{section 5.25.1}. 
To discuss  the method of acceleration
we fix an integer $k\geq0$ and 
set  
\begin{equation}                       \label{11.25.11.08}
\bar u^h=\sum_{j=0}^{k}b_ju^{2^{-j}h},  
\end{equation}
where, naturally, $u^{2^{-j}h}$   are 
the solutions to \eqref{5.20.3}, 
with $2^{-j}h$ in place of $h$, 
\begin{equation}                       \label{1.26.11.08}
(b_0,b_1,...,b_k)
:=(1,0,0,...,0)V^{-1}
\end{equation}
and $V^{-1}$ is the inverse of the 
Vandermonde matrix with entries
$$
V^{ij}:=2^{-(i-1)(j-1)}, \quad i,j=1,...,k+1.
$$

The following 
consequence 
of Theorem \ref{theorem 5.25.11} is the first main result 
of the paper 
on accelerated convergence.  
Its generalisation 
is presented in Section \ref{section proof}.

\begin{theorem}                     \label{theorem 5.25.3} 

Under the assumptions of Theorem \ref{theorem 5.25.11}
we have
\begin{equation}                                     \label{2.20.10.9}
E\sup_{t\leq T} \sup _{x\in\bG_h}|\bar u^h_{t}( x)-
u^{(0)}_{t}( x)|^{2}
\leq N |h|^{2(k+1)}\cK_m^{2} , 
\end{equation} 
where $N$ depends only on $\Lambda$, $d$, 
$m$, $K_0$, ..., $K_{m+1}$, 
$\kappa$, 
$ A_{0},...,A_{ m } $, and $T$.
 
\end{theorem}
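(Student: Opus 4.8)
The plan is to substitute the expansion of Theorem~\ref{theorem 5.25.11} into the definition \eqref{11.25.11.08} of $\bar u^h$ and use the defining property \eqref{1.26.11.08} of the weights $b_j$ to annihilate all intermediate powers of $h$, leaving $u^{(0)}$ plus a controllable remainder. First I would observe that Theorem~\ref{theorem 5.25.11} applies verbatim with $h$ replaced by $2^{-j}h$ for each $j=0,\dots,k$, and crucially that the processes $u^{(i)}$ are the \emph{same} for every step size (they are independent of $h$). Hence
$$
u^{2^{-j}h}_t(x)=\sum_{i=0}^k \frac{(2^{-j}h)^i}{i!}\,u^{(i)}_t(x)+R^{2^{-j}h}_t(x),
$$
and inserting this into \eqref{11.25.11.08} and interchanging the two finite sums yields
$$
\bar u^h_t(x)=\sum_{i=0}^k \frac{h^i}{i!}\Big(\sum_{j=0}^k b_j\,2^{-ji}\Big)u^{(i)}_t(x)+\sum_{j=0}^k b_j\,R^{2^{-j}h}_t(x).
$$

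The key point is the linear-algebra identity asserting that $\sum_{j=0}^k b_j\,2^{-ji}$ equals $1$ when $i=0$ and $0$ when $1\le i\le k$. I would check this directly from \eqref{1.26.11.08}: writing the row vector $(b_0,\dots,b_k)$, the relation $(b_0,\dots,b_k)=(1,0,\dots,0)V^{-1}$ is equivalent to $(b_0,\dots,b_k)V=(1,0,\dots,0)$, and since $V^{(j+1)(i+1)}=2^{-ji}$, reading off the $(i+1)$st entry of this row-vector product gives exactly $\sum_{j=0}^k b_j\,2^{-ji}=1$ for $i=0$ and $=0$ for $1\le i\le k$. Consequently only the $i=0$ term survives and
$$
\bar u^h_t(x)-u^{(0)}_t(x)=\sum_{j=0}^k b_j\,R^{2^{-j}h}_t(x).
$$

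It then remains only to estimate this remainder. By Cauchy--Schwarz,
$$
\Big|\sum_{j=0}^k b_j\,R^{2^{-j}h}_t(x)\Big|^2\le (k+1)\sum_{j=0}^k b_j^2\,|R^{2^{-j}h}_t(x)|^2 ,
$$
so taking $\sup_{t\le T}\sup_{x\in\bG_h}$ and expectations reduces the claim to bounding $E\sup_{t\le T}\sup_{x\in\bG_h}|R^{2^{-j}h}_t(x)|^2$ for each $j$. Here I would invoke Remark~\ref{remark 4.20.10.9} to replace $\bG_h$ by all of $\bR^d$ (equivalently, note $\bG_h\subseteq\bG_{2^{-j}h}$, since $\lambda h$ is the $2^j$-fold sum of $\lambda(2^{-j}h)$); then estimate \eqref{5.25.4} applied with step $2^{-j}h$ gives
$$
E\sup_{t\le T}\sup_{x\in\bR^d}|R^{2^{-j}h}_t(x)|^2\le N\,(2^{-j}|h|)^{2(k+1)}\cK_m^2\le N\,|h|^{2(k+1)}\cK_m^2 ,
$$
using $2^{-j}\le 1$. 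Summing over the $k+1$ values of $j$ and absorbing the factor $(k+1)\sum_j b_j^2$, which depends only on $k$ and hence only on $m$ and $d$, into the constant yields \eqref{2.20.10.9}.

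I expect no serious obstacle: this is the standard Richardson extrapolation mechanism, and once Theorem~\ref{theorem 5.25.11} is granted the only points requiring genuine care are the Vandermonde cancellation identity and the compatibility of the grids $\bG_h$ and $\bG_{2^{-j}h}$, both of which are routine.
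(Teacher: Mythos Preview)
Your proof is correct and follows essentially the same Richardson-extrapolation argument as the paper: substitute the expansion from Theorem~\ref{theorem 5.25.11} at step sizes $2^{-j}h$, use the Vandermonde relations to kill the intermediate powers, and estimate the remaining $\sum_j b_j R^{2^{-j}h}$ via \eqref{5.25.4}. You are in fact slightly more careful than the paper on two points---you spell out why $\sum_j b_j 2^{-ji}=\delta_{i0}$ from \eqref{1.26.11.08}, and you address the grid compatibility $\bG_h\subset\bG_{2^{-j}h}$ (or Remark~\ref{remark 4.20.10.9}) needed to apply \eqref{5.25.4} on $\bG_h$---whereas the paper absorbs both into the phrase ``This and \eqref{5.25.4} yield the result.''
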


\begin{proof}
By  Theorem \ref{theorem 5.25.11}
$$
u^{2^{-j}h}=
u^{(0)}+\sum_{i=1}^k\frac{h^i}{ i!2^{ji}}
u^{(i)}+ \bar r^{2^{-j}h}
 h^{k+1} ,
\quad j=0,1,...,k, 
$$
with $\bar r^{2^{-j}h}:=
 h ^{-j(k+1)}R^{2^{-j}h}$\,,
which gives 
$$
\bar u^h=\sum_{j=0}^kb_ju^{2^{-j}h}
=(\sum_{j=0}^{k}b_j)u^{(0)}
+\sum_{j=0}^{k}\sum_{i=1}^kb_j
\frac{h^i}{i!2^{ij}}u^{(i)}
+\sum_{j=0}^kb_j \bar r^{2^{-j}h}
 h^{k+1} 
$$
$$
=u^{(0)}+\sum_{i=1}^k\frac{ h^i}{i!} u^{(i)}
\sum_{j=0}^{k}\frac{b_j}{2^{ij}}
+\sum_{j=0}^kb_j \bar  r^{2^{-j}h}
=u^{(0)}+\sum_{j=0}^kb_j \bar r^{2^{-j}h}
 h^{k+1} ,
$$
since 
$$
\sum_{j=0}^{k}b_j=1,\quad  
 \sum_{j=0}^{k} b_j 2^{-ij} =0,\quad  
i=1,2,...k
$$
 by the definition of $(b_0,...,b_k)$. This and
\eqref{5.25.4} yield the result
and the theorem is proved.
 \end{proof}
 
 \begin{remark}
                                              \label{remark 1.20.10.9}
 Let the conditions of Theorem \ref{theorem 5.25.11} hold 
with  
$$
\frm=m>k+1+n+d/2, 
$$
 where $k$ and $n$ are nonnegative 
integers. Then  \eqref{2.20.10.9}  holds 
 with $\delta_{h,\lambda}\bar u^h$ and   
 $\delta_{h,\lambda}u^{(0)}$ 
 in place of  $\bar u^h$ and $u^{(0)}$, respectively, 
 for $\lambda\in \Lambda^n$.  
 \end{remark}
\begin{proof}
This follows from Theorem \ref{theorem 1.25.10.9} 
in the same way as Theorem \ref{theorem 5.25.3} 
 follows 
from Theorem \ref{theorem 5.25.11}. 
\end{proof}
 By the above remark one 
 can construct fast approximations for the derivatives 
 of $u^{(0)}$ via suitable linear combinations of finite differences 
 of $\bar u^h$.      
 
Sometimes  it suffices  to   
combine fewer terms  $u^{ 2^{-j}h }$   
to get accuracy of order $k+1$.  
For   integers 
$k\geq0$ define 
\begin{equation}                                                  \label{12.25.11.08}
\tilde{u}^{h}=\sum_{j=0}^{\tilde k}
\tilde b_ju^{2^{-j}h}\,,
\end{equation}
where  
\begin{equation*}                                                 \label{9.25.11.08}
( \tilde b_0, \tilde b_1,...,
\tilde b_{\tilde k})
:=(1,0,0,...,0)\tilde V^{-1}, 
\quad \tilde k= [\tfrac{k}{2}] , 
\end{equation*}
and  $\tilde V^{-1}$ is the inverse of the 
Vandermonde matrix with entries
$$
\tilde V^{ij}:=4^{-(i-1)(j-1)}, \quad i,j=1,...,\tilde k+1.
$$
 
\begin{theorem}
                                                                           \label{theorem 5.8.5.9}  

Let the conditions of Theorem \ref{theorem 5.25.11} hold.  
Then in the situation of Example \ref{example 5.22.2}
we have
\begin{equation}                                                          \label{8.8.5.9}
E\sup_{t\leq T} \sup_{x\in\bG_h} |\tilde u^h_{t}(x)
-u^{(0)}_{t}(x)|^{2}
\leq N |h|^{2(k+1)}\cK_m ^{2},  
\end{equation}
where  
$N$ is a constant depending only on $d$, $m$,   
$\kappa$, $K_0,\dots, K_{m+1}$, 
$ A_{0},...,A_{m} $, and $T$.  
\end{theorem}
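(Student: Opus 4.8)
\section*{Proof proposal}

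The plan is to deduce \eqref{8.8.5.9} from the basic expansion \eqref{1.26.4.9} by showing that, in the symmetric scheme of Example \ref{example 5.22.2}, \emph{all odd-order coefficients} $u^{(j)}$ vanish; once this is known, the Vandermonde weights $\tilde b_j$ built from $\tilde V^{ij}=4^{-(i-1)(j-1)}$ are designed precisely to annihilate the surviving even-order terms, exactly as in Theorem \ref{theorem 5.25.3} but with $h^{2}$ playing the role of $h$.

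First I would establish the key reflection lemma: for the choice of $\ga,\gb$ in Example \ref{example 5.22.2} one has $L^{-h}_t=L^h_t$ and $M^{-h,\rho}_t=M^{h,\rho}_t$ for every $h\neq0$. Writing $\delta_{h,\lambda}=h^{-1}(S^{\lambda}_h-I)$ with $S^{\lambda}_h\phi(x)=\phi(x+h\lambda)$ and noting $S^{\lambda}_{-h}=S^{-\lambda}_h$, the operator $M^{h,\rho}$ reduces to $b^{0\rho}+\sum_i b^{i\rho}(2h)^{-1}(S^{e_i}_h-S^{-e_i}_h)$, which is manifestly invariant under $h\mapsto -h$; a similar bookkeeping of the four blocks of $L^h$ (namely $\lambda,\mu\in\Lambda_0$; $\lambda=0$; $\mu=0$; $\lambda=\mu=0$), using $\ga^{\pm e_i,\mp e_j}=0$, the symmetry $a^{ij}=a^{ji}$ and the sign pattern of $\ga^{0,\pm e_i}$, shows $L^{-h}=L^h$ as well. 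Since the generating set $\Lambda\cup(-\Lambda)$ is symmetric we have $\bG_{-h}=\bG_h$, and since the initial data is $u_0$ for every step size, $u^{-h}$ and $u^h$ solve one and the same equation \eqref{5.20.3}; by the uniqueness recorded in Remark \ref{remark 5.25.2} this gives $u^{-h}=u^h$. (This is exactly where the emphasis that $h$ may be negative is used.)

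Second, I would feed $u^h=u^{-h}$ into \eqref{1.26.4.9}. Subtracting the two expansions yields
$$
2\sum_{\substack{1\le j\le k\\ j\ \mathrm{odd}}}\frac{h^{j}}{j!}\,u^{(j)}_t(x)=R^{-h}_t(x)-R^h_t(x),
$$
whose right-hand side is $O(|h|^{k+1})$ in the norm $(E\sup_t\sup_x|\cdot|^2)^{1/2}$ by \eqref{5.25.4}. The left-hand side is a polynomial in $h$ with $h$-independent (finite-norm) coefficients, so a term-by-term comparison — dividing by the lowest surviving odd power of $h$ and letting $h\to0$ — forces $u^{(j)}=0$ for every odd $j\le k$. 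Hence \eqref{1.26.4.9} collapses to $u^h=u^{(0)}+\sum_{i=1}^{\tilde k}\tfrac{h^{2i}}{(2i)!}u^{(2i)}+R^h$, where $2\tilde k\le k$ so that every even term of order $\le k$ is displayed.

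Finally, replacing $h$ by $2^{-j}h$ turns $h^{2i}$ into $4^{-ji}h^{2i}$, so that $\tilde u^h=(\sum_j\tilde b_j)u^{(0)}+\sum_{i=1}^{\tilde k}\tfrac{h^{2i}}{(2i)!}u^{(2i)}\big(\sum_j\tilde b_j 4^{-ij}\big)+\sum_j\tilde b_j R^{2^{-j}h}$; the defining relations $\sum_j\tilde b_j=1$ and $\sum_j\tilde b_j4^{-ij}=0$ for $i=1,\dots,\tilde k$ eliminate every even term, leaving $\tilde u^h-u^{(0)}=\sum_{j=0}^{\tilde k}\tilde b_j R^{2^{-j}h}$. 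Estimate \eqref{5.25.4} applied to each $R^{2^{-j}h}$ (with $2^{-j}\le1$) together with the triangle inequality then gives \eqref{8.8.5.9} with $N$ depending only on the listed data. This last step is a routine copy of the proof of Theorem \ref{theorem 5.25.3}; the genuine content — and the main obstacle — is the vanishing of the odd coefficients, i.e.\ the reflection invariance $u^h=u^{-h}$ and the extraction of the coefficients from the exact identity above.
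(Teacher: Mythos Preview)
Your proposal is correct and follows essentially the same approach as the paper: the authors also observe that in Example \ref{example 5.22.2} equation \eqref{5.20.3} is invariant under $h\mapsto-h$, conclude $u^{h}=u^{-h}$ by uniqueness, deduce $u^{(j)}=0$ for odd $j\le k$ from \eqref{1.26.4.9}, and then repeat the Vandermonde argument of Theorem \ref{theorem 5.25.3}. You have simply written out more of the details (the verification of $L^{-h}=L^{h}$, $M^{-h,\rho}=M^{h,\rho}$ and the coefficient extraction) than the paper's one-sentence sketch does.
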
 

To prove this result we need only 
repeat the proof of Theorem \ref{theorem 5.25.3} 
taking into account that
in \eqref{1.26.4.9} we have
 $u^{(j)}_{t}=0$ for odd
$j\leq k$ since $u^{h}_{t}=u^{-h}_{t}$
owing to the fact that 
 in the case of Example \ref{example 5.22.2}  
equation \eqref{5.20.3}
does not change if we replace $h$ with~$-h$.

\begin{remark} Notice that without acceleration, i.e., when 
$k=1$ in the above theorem,  
the mean square norm of the supremum in $t$ and $x$ of the error 
of the finite difference approximations in Example 
\ref{example 5.22.2} is proportional 
to $h^2$. This is a sharp result   see, e.g., Remark 2.21 in 
\cite{DK}      
on finite difference approximations for deterministic parabolic PDEs.
\end{remark}

\begin{example}
Assume that in the situation
of  Example \ref{example 5.22.2} we have 
$d=2$ and $m=7$. Then
$$
 \tilde u^h:=\tfrac{4}{3}u^{h/2}-\tfrac{1}{3}u^h
$$ 
satisfies
$$
E\sup_{t\leq T}\sup_{x\in\bG_h}|u^{(0)}_{t}(x)
- \tilde u^h_{t}( x)|
\leq N h^{ 4}.
$$  
\end{example}

\begin{example}
Take $d=1$ and consider the following SPDE:
$$
du_{t}=3D^{2}u_{t}\,dt+2Du_{t}\,dw_{t}
$$
with initial data $u_{0}(x)=\cos x$, where $w_{t}$
is a one-dimensional Wiener process. Then a unique bounded
solution is $u_{t}(x)=e^{-t}\cos (x+2w_{t}) $. 
Example~\ref{example 5.22.2}
suggests the following version of \eqref{5.20.3}:
$$
du^{h}_{t}(x)=3\frac{u^{h}_{t}(x+h)-2u^{h}_{t}(x)+u^{h}_{t}(x-h)}
{h^{2}}\,dt+\frac{u^{h} _{t}(x+h)-u^{h}_{t}(x-h)}{h}\,dw_{t},
$$
 the  unique bounded solution 
of which with initial condition
$\cos x$ is given by
$$
u^{h}_{t}(x)=e^{-c_{h}t}\cos(x+2\phi_{h}w_{t}),\quad
h^{2}c_{h}=12\sin^{2}\frac{h}{2}-2\sin^{2}h,\quad
\phi_{h}=\frac{\sin h}{h}.
$$
For $t=1$, $h=0 .1$, and $w_{t}=0$ we have
$$
u_{1}(0)\approx0.3678794412,\quad u_{1}^{h}(0)\approx0.366352748,\quad 
u_{1}^{h/2}(0)\approx0.3674966179,
$$
$$
 \tilde u^h_{1}(0)=\tfrac{4}{3}u^{h/2}_{1}(0)
-\tfrac{1}{3}u^h_{1}(0)\approx0.3678779079.
$$ 
It is instructive to observe that such a level of
accuracy is achieved for $u^{\tilde h}_{1}(0)$ with $\tilde h=0.00316$,
which is more than 15 times smaller than $h/2$.

Actually, this example does not quite fit into our scheme
because $u_{0}$ is not square summable over $\bR$.
In connection with this we reiterate that the main
goal of the present article was to introduce a method
and not to prove the most general results. Without much
trouble our approach
can be extended to  a class of SPDEs with growing data
 by the help of weighted Sobolev spaces (see \cite{WSP}),  
and then the above example can be included formally.

\end{example}

\mysection{Auxiliary facts}
                                                \label{section 5.25.1}

The following fact is easily obtained by  Young's inequality
owing to Assumption \ref{assumption 5.19.3}.

\begin{lemma}
                                                \label{lemma 5.26.1}
 
Let Assumption \ref{assumption 5.19.3} hold. 
Then for all $\varphi\in L_2$ we have
$$
\mathbb Q_t(\varphi):
=\int_{\bR^d}\big[2\varphi(x)L^h_t\varphi(x)+
\sum_{\rho=1}^{\infty}|M_t^{h,\rho}\varphi(x)|^2\big]\,dx
$$
$$
\leq-\frac{\kappa}{2}
\sum_{\lambda\in\Lambda_{0}}\|\delta_{h,\lambda}
\varphi\|_{L_2}^2
+N\|\varphi\|^{2}_{L_2},
$$
where $N$ depends only on $\kappa$, $A_0$, $A_1$, 
and the number of 
vectors in
$\Lambda$. 
\end{lemma}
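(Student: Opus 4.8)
The plan is to rewrite $\mathbb{Q}_t(\varphi)$ as minus the pointwise quadratic form of Assumption~\ref{assumption 5.19.3}(ii), evaluated at $z_\lambda=\delta_{h,\lambda}\varphi(x)$, plus remainder terms that can be absorbed by Young's inequality. Two elementary facts drive the computation. Since translations are $L_2$-isometries of $\bR^d$, integration by parts reads $\int_{\bR^d}u\,\delta_{h,\lambda}v\,dx=-\int_{\bR^d}(\delta_{-h,\lambda}u)\,v\,dx$; and the discrete product rule gives
$$
\delta_{h,\mu}(\ga^{\lambda\mu}\varphi)(x)=\ga^{\lambda\mu}(x)\,\delta_{h,\mu}\varphi(x)
+\big(\delta_{h,\mu}\ga^{\lambda\mu}(x)\big)\,\varphi(x+h\mu).
$$
Note that for $\varphi\in L_2$ all integrals converge, because $\|\delta_{h,\lambda}\varphi\|_{L_2}\le 2|h|^{-1}\|\varphi\|_{L_2}$, the coefficients are bounded, and $\sum_\rho|M^{h,\rho}\varphi|^2$ is integrable by Cauchy--Schwarz in $\rho$.

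First I would treat the top-order part of $\int 2\varphi L^h_t\varphi\,dx$, namely the sum over $\lambda,\mu\in\Lambda_0$. Writing $\delta_{h,\lambda}\delta_{-h,\mu}=\delta_{-h,\mu}\delta_{h,\lambda}$ and moving $\delta_{-h,\mu}$ off $\delta_{h,\lambda}\varphi$ by the adjoint relation, then applying the product rule in the form above (this particular form is chosen so that $\ga^{\lambda\mu}$ stays evaluated at $x$), one gets
$$
\int 2\varphi\,\ga^{\lambda\mu}\delta_{h,\lambda}\delta_{-h,\mu}\varphi\,dx
=-\int 2\ga^{\lambda\mu}(x)\,\delta_{h,\mu}\varphi\,\delta_{h,\lambda}\varphi\,dx
-\int 2\big(\delta_{h,\mu}\ga^{\lambda\mu}\big)\varphi(\cdot+h\mu)\,\delta_{h,\lambda}\varphi\,dx .
$$
Meanwhile $\sum_\rho|M^{h,\rho}\varphi|^2$ is already pointwise, its pure-difference part being $\sum_{\lambda,\mu\in\Lambda_0}\gb^{\lambda\rho}\gb^{\mu\rho}\,\delta_{h,\lambda}\varphi\,\delta_{h,\mu}\varphi$. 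Adding these leading contributions, the top-order part of $\mathbb{Q}_t(\varphi)$ is
$$
-\int\sum_{\lambda,\mu\in\Lambda_0}\big(2\ga^{\lambda\mu}-\gb^{\lambda\rho}\gb^{\mu\rho}\big)(x)\,\delta_{h,\lambda}\varphi\,\delta_{h,\mu}\varphi\,dx
\le-\kappa\sum_{\lambda\in\Lambda_0}\|\delta_{h,\lambda}\varphi\|_{L_2}^2,
$$
by Assumption~\ref{assumption 5.19.3}(ii) applied pointwise with $z_\lambda=\delta_{h,\lambda}\varphi(x)$.

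It remains to absorb the lower-order remainders: the product-rule term $-2(\delta_{h,\mu}\ga^{\lambda\mu})\varphi(\cdot+h\mu)\delta_{h,\lambda}\varphi$; the parts of $L^h_t$ containing a factor $\delta_{h,0}=\mathrm{id}$ (those with $\ga^{0\mu},\ga^{\lambda0},\ga^{00}$); and the terms of $\sum_\rho|M^{h,\rho}\varphi|^2$ involving $\gb^{0\rho}$. Each is $\int(\text{bounded})\cdot\delta_{h,\lambda}\varphi\cdot\varphi(\cdot\pm h\nu)\,dx$ or $\int(\text{bounded})\,\varphi^2\,dx$, since $|\gb^\nu|_{l_2}\le A_0$, translation of $\varphi$ preserves the $L_2$ norm, and $\delta_{-h,\mu}\varphi=(\delta_{h,\mu}\varphi)(\cdot-h\mu)$. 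Young's inequality bounds each by $\epsilon\sum_{\lambda\in\Lambda_0}\|\delta_{h,\lambda}\varphi\|_{L_2}^2+N_\epsilon\|\varphi\|_{L_2}^2$; choosing $\epsilon$ small, depending only on $\kappa$ and the number of vectors in $\Lambda$, so that the total does not exceed $\tfrac\kappa2\sum_\lambda\|\delta_{h,\lambda}\varphi\|_{L_2}^2$, gives the claim with $N=N(\kappa,A_0,A_1,|\Lambda|)$. The one genuine subtlety, and the step I would treat most carefully, is \emph{uniformity in $h$}: the key quantitative input is that $|\delta_{h,\mu}\ga^{\lambda\mu}|\le c_\Lambda A_1$ \emph{independently of $h$} (mean value theorem, using $|D\ga^{\lambda\mu}|\le A_1$ since $\bar\frm\ge1$). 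A careless integration by parts would instead leave the top-order coefficient at the shifted point $x+h\mu$; for large $|h|$ the increment $\ga^{\lambda\mu}(x+h\mu)-\ga^{\lambda\mu}(x)$ is only $O(1)$, not small, and could not be absorbed — which is precisely why the product rule must be arranged to keep $\ga^{\lambda\mu}(x)$ unshifted in the coercive term, confining the shift to the $L_2$-isometric factor $\varphi(\cdot+h\mu)$.
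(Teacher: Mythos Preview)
Your proof is correct and follows essentially the same route as the paper's: both move $\delta_{-h,\mu}$ across by the adjoint relation, apply the discrete Leibniz rule so that the unshifted coefficient $\ga^{\lambda\mu}(x)$ sits next to $(\delta_{h,\lambda}\varphi)(\delta_{h,\mu}\varphi)$, invoke Assumption~\ref{assumption 5.19.3}(ii) pointwise for the coercive part, and absorb the remaining cross terms (those involving $\delta_{h,\mu}\ga^{\lambda\mu}$, the $\lambda=0$ or $\mu=0$ pieces of $L^h$, and the $\gb^{0\rho}$ pieces of $M^{h,\rho}$) by Young's inequality; the paper labels these four pieces $\mathbb Q^{(1)}_t,\dots,\mathbb Q^{(4)}_t$. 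Your closing remark that $|\delta_{h,\mu}\ga^{\lambda\mu}|\le c_\Lambda A_1$ uniformly in $h$ is exactly the reason the constant depends on $A_1$ and not on $h$, a point the paper uses but does not spell out.
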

 
\begin{proof}
First observe that for $\mu\in\Lambda_0$ the 
conjugate operator
in $L_{2}$ to $\delta_{-h,\mu}$ is $
\delta_{h,\mu}$. Notice also  that 
$$
\delta_{h,\mu}(\phi\psi)=\phi\delta_{h,\mu}\psi
+(T_{h,\mu}\psi)\delta_{h,\mu}\phi,
$$
where $T_{h,\mu}\psi(x)=\psi(x+h\mu)$. 
Thus by simple calculations we get 
$$
\mathbb Q_t(\varphi)
=\sum_{i=1}^4\mathbb Q_t^{(i)}(\varphi)
$$
with 
$$
\mathbb Q_t^{(1)}(\varphi)=
- \int_{\bR^{d}}\sum_{\lambda,\mu\in\Lambda_0}
((2\ga^{\lambda\mu}_{t}-
\gb^{\lambda \rho}_{t}\gb^{\mu \rho}_{t})
(\delta_{h,\lambda}\varphi)
\delta_{h,\mu}\varphi)(x)\,dx
$$
$$
\mathbb Q_t^{(2)}(\varphi)=
2\int_{\bR^{d}}\sum_{\lambda,\mu\in\Lambda_0}
((T_{h,\mu}\varphi)(\delta_{h,\lambda}\varphi)
\delta_{h,\mu}\ga^{\lambda\mu}_{t})(x)\,dx
$$
$$
\mathbb Q_t^{(3)}(\varphi)=
\int_{\bR^{d}}2\ga^{00}_t\varphi^2(x)
+2\varphi(x)\sum_{\lambda\in\Lambda_0}
(\ga^{\lambda0}_t\delta_{h,\lambda}\varphi+
\ga^{0\lambda}_t\delta_{-h,\lambda}\varphi)(x)\,dx
$$
$$
\mathbb Q_t^{(4)}(\varphi)=
\int_{\bR^{d}}\gb^{00}_t\varphi^2(x)
+2\sum_{\lambda\in\Lambda_0}
\gb^{\lambda \rho}_{t}\gb^{0\rho}_{t}
\varphi\delta_{h,\lambda}\varphi(x)\,dx. 
$$
Due to Assumption \ref{assumption 5.19.3} (ii)
$$
\mathbb Q_t^{(1)}(\varphi)
\leq-\kappa
\sum_{\lambda\in\Lambda_0}
\|\delta_{h,\lambda}\varphi\|^2_{L_2}. 
$$
By Assumption \ref{assumption 5.19.3}(i), 
Young's inequality and the shift invariance  of 
 Lebesgue measure
$$
\mathbb Q_t^{(i)}(\varphi)\leq \frac{\kappa}{6}
\sum_{\lambda\in\Lambda_0}
\|\delta_{h,\lambda}\varphi\|^2_{L_2}+N\|\varphi\|^2, \quad i=2,3,4, 
$$
with a constant $N$ depending only on the number of elements of 
$\Lambda$, $\kappa$, $A_0$ and, for $i=2$ also on 
$A_1$. 
We finish the proof by summing up these estimates.
\end{proof}
 
 Recall the notation 
$
\bW^{m}_{2}(T)=L_{2}(\Omega
\times[0,T],\cP,W^{m}_{2}). 
$ 
Remember that $W^{m}_{2}$ denotes the Hilbert-Sobolev 
space of real-valued and also that of $l_2$-valued functions on $\bR^d$. 
Thus $\bW^{m}_{2}(T)$ denotes the Hilbert space of predictable 
functions $\phi=\phi_t$ on $\Omega\times[0,T]$ with values in the  
$W^m_2$ space of real-valued functions, 
and $\bW^{m}_{2}(T)=\bW^{m}_{2}(T,l_2)$ denotes the Hilbert space of 
functions $g=(g^{\rho})_{\rho=1}^{\infty}$ with 
values in the $W^m_2$ space of $l_2$-valued functions   
on $\bR^d$, with norm 
 defined by 
$$
\|\phi\|^2_{\bW^{m}_{2}(T)}=E\int_{0}^{T}
\|\phi_{t}\|^{2}_{W^{m}_{2}}\,dt<\infty \quad\text{and}
$$
$$
\|g\|^2_{\bW^{m}_{2}(T)}
=E\int_{0}^{T}\sum_{\rho=1}^{\infty}
\|g_{t}^{\rho}\|^{2}_{W^{m}_{2}}\,dt<\infty, 
$$
respectively. 
\begin{theorem}
                                              \label{theorem 15.25.3}
 
Let 
Assumption \ref{assumption 5.19.3} (i)  
hold.  
Let $f^{\mu}\in\bW^{\frm}_{2}(T)$, $\mu\in\Lambda$, 
 and $(g^{\rho})_{\rho=1}^{\infty}\in \bW^{\frm}_2(T)$  
be some functions.
Then  for each $h\neq 0$ 
there exists a unique continuous
$L_{2}$-valued solution $u^{h}_{t}$ of 
\begin{equation}
                                                        \label{5.26.6}
du^{h}_{t}=(\ga^{\lambda\mu}_{t}\delta_{h,\lambda}\delta_{-h,\mu}
u^{h}_{t}+\delta_{-h,\mu}f^{\mu}_{t})\,dt
+(\gb^{\lambda\rho}_{t}\delta_{h,\lambda}u^{h}_{t}+
g^{\rho}_{t})\,dw^{\rho}_{t}
\end{equation}
 for 
any $W^{\frm}_2$-valued  
$\cF_0$-measurable   
initial condition $u_{0}$. 
This solution is a
$W^{\frm}_{2}$-valued continuous process. 
 Moreover, if  
Assumption \ref{assumption 5.19.3} (ii) is also
satisfied, then 
$$
E\sup_{t\leq T}\|u^{h}_{t}\|^{2}_{W^{\frm}_{2}}
+E\int_{0}^{T}\sum_{\lambda\in\Lambda}
\|\delta_{h,\lambda}u^{h}_{t}\|^{2}_{W^{\frm}_{2}}\,dt
$$
\begin{equation}
                                                        \label{5.26.1}
\leq NE\int_{0}^{T}
\big(\sum_{\mu\in\Lambda}\|f^{\mu}_{t}\|^{2}_{W^{\frm}_{2}}
+\|g_{t}\|^{2}_{W^{\frm}_{2}}\big)\,dt
+NE \|u_{0}\|^{2}_{W^{\frm}_{2}},
\end{equation}
where $N$ depends only on $d$, ${\frm}$,     $\Lambda$,
$\kappa$, $A_0,\dots, A_{\bar{\frm}} $, and $T$. 

\end{theorem}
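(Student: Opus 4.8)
The plan is to read \eqref{5.26.6} as a linear It\^o equation in a Hilbert space and then to extract the a priori bound \eqref{5.26.1} from the coercivity recorded in Lemma \ref{lemma 5.26.1}. First I would note that for each fixed $h\neq0$ the operator $\delta_{h,\lambda}$ commutes with every spatial derivative $D^{\gamma}$ and has norm at most $2/|h|$ on each $W^{\frm}_{2}$; combined with Assumption \ref{assumption 5.19.3}~(i), which makes multiplication by $\ga^{\lambda\mu}_{t}$ and $\gb^{\lambda\rho}_{t}$ bounded on $W^{\frm}_{2}$, this shows that $L^{h}_{t}$ is a bounded linear operator on $W^{\frm}_{2}$ and that $(M^{h,\rho}_{t})_{\rho}$ is a bounded operator from $W^{\frm}_{2}$ into $l_{2}(W^{\frm}_{2})$, with predictable coefficients. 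Since $\delta_{-h,\mu}f^{\mu}\in\bW^{\frm}_{2}(T)$ and $g\in\bW^{\frm}_{2}(T)$, equation \eqref{5.26.6} is then linear with globally Lipschitz (indeed bounded linear) coefficients, so the standard existence and uniqueness theory for It\^o equations in Hilbert spaces with Lipschitz coefficients (cf.\ Remark \ref{remark 5.25.2}) yields a unique continuous $W^{\frm}_{2}$-valued adapted solution $u^{h}$; running the same argument in $L_{2}$ gives uniqueness in the larger class of continuous $L_{2}$-valued solutions.

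For the estimate I would first work at the level $\frm=0$. Applying It\^o's formula for the square of the $L_{2}$-norm to $u^{h}_{t}$ gives
\[
\|u^{h}_{t}\|^{2}_{L_{2}}=\|u_{0}\|^{2}_{L_{2}}+\int_{0}^{t}\Big[\mathbb{Q}_{s}(u^{h}_{s})+2(u^{h}_{s},\delta_{-h,\mu}f^{\mu}_{s})_{L_{2}}+2(M^{h,\rho}_{s}u^{h}_{s},g^{\rho}_{s})_{L_{2}}+\|g_{s}\|^{2}_{L_{2}}\Big]\,ds+m_{t},
\]
where $m_{t}$ is a local martingale and $\mathbb{Q}_{s}$ is the form from Lemma \ref{lemma 5.26.1}, which bounds $\mathbb{Q}_{s}(u^{h}_{s})$ by $-\tfrac{\kappa}{2}\sum_{\lambda\in\Lambda_{0}}\|\delta_{h,\lambda}u^{h}_{s}\|^{2}_{L_{2}}+N\|u^{h}_{s}\|^{2}_{L_{2}}$. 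Moving the difference operators onto the free terms via the summation-by-parts relation noted in the proof of Lemma \ref{lemma 5.26.1} and applying Young's inequality, the cross terms are dominated by $\varepsilon\sum_{\lambda}\|\delta_{h,\lambda}u^{h}_{s}\|^{2}_{L_{2}}+N_{\varepsilon}\big(\sum_{\mu}\|f^{\mu}_{s}\|^{2}_{L_{2}}+\|g_{s}\|^{2}_{L_{2}}\big)+N\|u^{h}_{s}\|^{2}_{L_{2}}$. Choosing $\varepsilon$ small enough to absorb the gradient terms into the coercive term, taking expectations, and invoking Gronwall's lemma controls $E\|u^{h}_{t}\|^{2}_{L_{2}}$ together with $E\int_{0}^{T}\sum_{\lambda}\|\delta_{h,\lambda}u^{h}_{t}\|^{2}_{L_{2}}\,dt$; the Burkholder--Davis--Gundy inequality applied to $m_{t}$ then upgrades this to the bound on $E\sup_{t\leq T}\|u^{h}_{t}\|^{2}_{L_{2}}$. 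Crucially, the constant produced is independent of $h$ because the constant in Lemma \ref{lemma 5.26.1} is.

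To reach general $\frm$ I would apply the same It\^o computation to $\|u^{h}_{t}\|^{2}_{W^{\frm}_{2}}=\sum_{|\gamma|\leq\frm}\|D^{\gamma}u^{h}_{t}\|^{2}_{L_{2}}$. Since $\delta_{h,\lambda}$ commutes with $D^{\gamma}$, differentiating \eqref{5.26.6} shows that $D^{\gamma}u^{h}$ solves an equation whose principal part is $L^{h}_{t}(D^{\gamma}u^{h})$ and $M^{h,\rho}_{t}(D^{\gamma}u^{h})$, so the top-order contribution is $\sum_{|\gamma|\leq\frm}\mathbb{Q}_{s}(D^{\gamma}u^{h}_{s})$, to which Lemma \ref{lemma 5.26.1} again applies term by term. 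The remaining terms are commutators in which, by the Leibniz rule, at least one derivative falls on a coefficient; integrating them against $D^{\gamma}u^{h}$ and using a further summation by parts leaves products of first-order difference quotients, one of a coefficient and one of a derivative of $u^{h}$ of order $\leq\frm$. Using the bounds on $D^{j}\ga$ ($j\leq\bar{\frm}$) and $D^{j}\gb$ ($j\leq\frm$) from Assumption \ref{assumption 5.19.3}~(i), Young's inequality absorbs these into $\varepsilon\sum_{\lambda}\|\delta_{h,\lambda}u^{h}_{s}\|^{2}_{W^{\frm}_{2}}+N\|u^{h}_{s}\|^{2}_{W^{\frm}_{2}}$, and Gronwall together with Burkholder--Davis--Gundy then yields \eqref{5.26.1}.

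The main obstacle is precisely this last step: verifying that the commutators generated by differentiating the difference operators up to order $\frm$ never demand more regularity of $\ga$ and $\gb$ than is assumed, and that the genuinely top-order difference quotient of $u^{h}$ is always the one absorbed into the negative coercive term, all with constants that do not depend on $h$. The $h$-independence is the delicate point, and it holds because each difference quotient of a coefficient is bounded by the corresponding derivative bound $A_{j}$ uniformly in $h$ (mean value theorem and shift invariance of Lebesgue measure), and because the coercive gain in Lemma \ref{lemma 5.26.1} is itself $h$-independent.
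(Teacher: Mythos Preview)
Your outline matches the paper's proof almost exactly: existence/uniqueness via Lipschitz theory in $W^{\frm}_{2}$, It\^o's formula for $\|u^{h}_{t}\|^{2}_{L_{2}}$ combined with Lemma~\ref{lemma 5.26.1}, Gronwall, then Davis/BDG to get the supremum, followed by differentiation in $x$ to climb to $W^{\frm}_{2}$. The gap is in the commutator step, and it is exactly the point the paper singles out as ``a point which usually does not appear.''

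When you differentiate once and meet the term $\tilde\ga^{\lambda\mu}_{t}\delta_{h,\lambda}\delta_{-h,\mu}u^{h}_{t}$ (with $\lambda,\mu\in\Lambda_{0}$), your proposed ``further summation by parts'' moves $\delta_{-h,\mu}$ onto the product $\tilde\ga^{\lambda\mu}_{t}D^{\gamma}u^{h}_{t}$; the discrete Leibniz rule then produces a factor $\delta_{h,\mu}\tilde\ga^{\lambda\mu}_{t}$, and your mean-value argument bounds this by $A_{2}$. But at level $\frm=1$ (and, more generally, whenever the Leibniz expansion already carries $\frm$ derivatives on $\ga$) only $A_{0},\dots,A_{\bar\frm}$ are available, so the required $A_{\frm+1}$ is simply not assumed. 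The summation-by-parts route therefore overspends coefficient regularity by one order.

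The paper avoids this by applying the inequality $\|\delta_{-h,\mu}\phi\|_{L_{2}}\le\|\partial_{\mu}\phi\|_{L_{2}}$ (Remark~\ref{remark 5.29.1}) to the \emph{solution}, not to the coefficient: one estimates
\[
\big|(\tilde u^{h}_{s},\,\tilde\ga^{\lambda\mu}_{s}\delta_{h,\lambda}\delta_{-h,\mu}u^{h}_{s})\big|
\le A_{1}\,\|\tilde u^{h}_{s}\|_{L_{2}}\,\|\delta_{h,\lambda}\partial_{\mu}u^{h}_{s}\|_{L_{2}}
\le \varepsilon\,\|D\delta_{h,\lambda}u^{h}_{s}\|_{L_{2}}^{2}+N_{\varepsilon}\|\tilde u^{h}_{s}\|_{L_{2}}^{2},
\]
and after summing over the coordinate directions $i$ the term $\varepsilon\|D\delta_{h,\lambda}u^{h}_{s}\|_{L_{2}}^{2}$ is absorbed into the coercive gain at the \emph{same} level. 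This costs no extra derivative of $\ga$, and the induction then proceeds with the previously established bound on $\int_{0}^{T}\|\delta_{h,\lambda}u^{h}_{t}\|^{2}_{W^{n}_{2}}\,dt$. Replace your summation-by-parts handling of the double-difference commutator by this use of Remark~\ref{remark 5.29.1} and the rest of your argument goes through.
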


\begin{proof}
The first assertion is a simple consequence of the fact that
\eqref{5.20.3} is an ordinary It\^o equation 
with Lipschitz continuous coefficients for $L_{2}$-valued
processes. Similarly, \eqref{5.20.3} has a unique
$W^{\frm}_{2}$-valued solution and, 
since $W^{\frm}_{2}\subset L_{2}$,
 this proves that the $L_{2}$-valued
solution is actually $W^{\frm}_{2}$-valued. 
 Moreover, we can easily get estimate 
\eqref{5.26.1} with a constant $N$ which depends on $h$. 
In particular we have that the solution is in 
$\bW^{\frm}_{2}(T)$. 

The proof of estimate \eqref{5.26.1}
with $N$ independent of $h$ is rather standard but
still contains a point which
usually does not appear. This concerns the treatment of 
$\tilde{\ga}^{\lambda\mu}\delta_{h,\lambda}
\delta_{-h,\mu}u^{h}_{t}$ after \eqref{5.31.6} without  assuming
that 2 derivatives of $\ga$ are bounded.

By It\^o's formula for $L_{2}$-valued processes
 we find
$$
d\|u^{h}_{t}\|^{2}_{L_{2}}
=\{\mathbb Q_t(u_t^h)+2(u^h,f^{\mu}_t)
+2(b^{\lambda\rho}\delta_{h,\lambda}u_t^h,g_t^{\rho})
+\|g^{\rho}_t\|^2_{L_2}\}\,dt
$$
\begin{equation}
                                                     \label{5.31.1}
+2(u^{h}_{t},\gb^{\lambda \rho}\delta_{h,\lambda}u^{h}_{t} 
+g^{\rho}_{t})\,dw^{\rho}_{t}.
\end{equation}
 
We use Lemma \ref{lemma 5.26.1}, the inequalities like
$|ab|\leq\varepsilon a^{2}+\varepsilon^{-1}b^{2}$, and
Assumption \ref{assumption 5.19.3} (i) to conclude that
$$
E \|u^{h}_{t}\|^{2}_{L_{2}}
+ \frac{\kappa}{2} E\int_{0}^{t}
\sum_{\lambda\in\Lambda_{0}}
\|\delta_{h,\lambda}u^{h}_{s}\|^{2}_{L_{2}}\,ds
\leq
E\|u_{0}\|^{2}_{L_{2}}
$$
\begin{equation}
                                                     \label{5.31.2}
+ N
E\int_{0}^{t}\big(\|u^{h}_{s}\|^{2}_{L_{2}}+\sum_{\lambda\in\Lambda}
\|f^{\lambda}_{s}\|^{2}_{L_{2}}
+\|g_{s}\|^{2}_{L_{2}}\big)\,ds <\infty .
\end{equation}
By Gronwall's lemma we can eliminated the first term in the integral
on the right in \eqref{5.31.2} and get that
$$
E\int_{0}^{t}\sum_{\lambda\in\Lambda}
\|\delta_{h,\lambda}u^{h}_{s}\|^{2}_{L_{2}}\,ds
\leq
NE\|u_{0}\|^{2}_{L_{2}}
$$
\begin{equation}
                                                     \label{5.31.3}
+ N
E\int_{0}^{t}\big(\sum_{\lambda\in\Lambda}
\|f^{\lambda}_{s}\|^{2}_{L_{2}}
+\|g_{s}\|^{2}_{L_{2}}\big)\,ds.
\end{equation}
After that we come back to \eqref{5.31.1} and use 
Davis's  inequality to derive that
$$
E\sup_{t\leq T}\|u^{h}_{t}\|^{2}_{ L_2}
\leq NE\|u_{0}\|^{2}_{L_{2}}
$$
\begin{equation}
                                                     \label{5.31.4}
+NE\int_{0}^{T}\big(\sum_{\lambda\in\Lambda}
\|f^{\lambda}_{t}\|^{2}_{L_{2}}
+\|g_{t}\|^{2}_{L_{2}}\big)\,dt+N_{1}J,
\end{equation}
where
$$
J=E\big(\int_{0}^{T}\sum_{\rho=1}^{\infty}\big(
\int_{\bR^{d}}|u^{h}_{t}(\gb^{\lambda\rho}\delta_{h,\lambda}u^{h}_{t}
+g^{\rho}_{t})|\,dx\big)^{2}\,dt\big)^{1/2} 
$$
$$
\leq E\big(\int_{0}^{T}\|u^{h}_{t}\|_{L_{2}}^{2}
\|\gb^{\lambda}\delta_{h,\lambda}u^{h}_{t}
+g_{t}\|_{L_{2}}^{2} \,dt\big)^{1/2} 
$$
$$
\leq E\sup_{t\leq T}\|u^{h}_{t}\|_{L_{2}}
\big(\int_{0}^{T} 
\|\gb^{\lambda}\delta_{h,\lambda}u^{h}_{t}
+g_{t}\|_{L_{2}}^{2} \,dt\big)^{1/2} 
$$
$$
\leq(2N_{1})^{-1}E\sup_{t\leq T}\|u^{h}_{t}\|^{2}_{L_{2}}
+NE\int_{0}^{T}(\sum_{\lambda\in\Lambda} 
\|\delta_{h,\lambda}u^{h}_{t}\|^{2}_{L_{2}}
+\|g_{t}\|_{L_{2}}^{2})\,dt.
$$
This and \eqref{5.31.3} allow us to drop the last term
in \eqref{5.31.4} which again combined with
\eqref{5.31.3} yields
$$
E\sup_{t\leq T}\|u^{h}_{t}\|^{2}_{L_{2}}+
E\int_{0}^{T}\sum_{\lambda\in\Lambda}
\|\delta_{h,\lambda}u^{h}_{t}\|^{2}_{L_{2}}\,dt
$$
\begin{equation}
                                                           \label{5.26.4}
\leq N
E\|u_{0}\|^{2}_{L_{2}}
+ N
E\int_{0}^{T}\big(\sum_{\lambda\in\Lambda}
\|f^{\lambda}_{t}\|^{2}_{L_{2}}
+\|g_{t}\|^{2}_{L_{2}}\big)\,dt.
\end{equation}

This proves the theorem if $\frm=0$. If $\frm\geq1$,
we   differentiate \eqref{5.26.6} with respect
to $x^{i}$, and introduce the notation
$\tilde{\phi}$ for the derivative of a function
 $\phi$ in $x^{i}$. Then we obtain
\begin{equation}
                                                           \label{5.31.6}
d \tilde{u}^{h}_{t}=(\ga^{\lambda\mu}\delta_{h,\lambda}
\delta_{-h,\mu}\tilde{u}^{h}_{t}+\delta_{-h,\mu}\hat{f}^{\mu}_{t}
)\,dt+(\gb^{\lambda\rho}_{t}\delta_{h,\lambda}\tilde{u}_{t}
+\hat{g}^{\rho}_{t})\,dw^{\rho}_{t},
\end{equation}
where
$$
\hat{f}^{\mu}_{t}=\tilde{f}^{\mu}_{t},\quad\mu\ne0,\quad
\hat{f}^{0}_{t}=\tilde{f}^{0}_{t}+
\tilde{\ga}^{\lambda\mu}\delta_{h,\lambda}
\delta_{-h,\mu}u^{h}_{t},\quad \hat{g}^{\rho}_{t}=\tilde{g}^{\rho}_{t}
+\tilde{\gb}^{\lambda\rho}_{t}\delta_{h,\lambda}u^{h}_{t}.
$$
We proceed with \eqref{5.31.6} as above with
\eqref{5.26.6} with one exception that 
  for $\mu\in\Lambda_0$  we use the inequality 
(cf. Remark \ref{remark 5.29.1})
$$
E\int_{0}^{t}\int_{\bR^{d}}|\tilde{u}^{h} _{s}
\delta_{h,\lambda}
\delta_{-h,\mu}u^{h}_{s}|\,dxds
\leq   \int_0^t 
E\|\tilde{u}^{h}_{s}\|_{L_{2}}\|\delta_{h,\lambda}\partial_{\mu}
u^{h}_{s}\|_{L_{2}} \,ds 
$$
$$
\leq\varepsilon  \int_0^t 
E\|D\delta_{h,\lambda}u^{h}_{s}
\|_{L_{2}}^{2}
 \, ds 
+N\varepsilon^{-1}
 \int_0^t E\|\tilde{u}^{h}_{s}\|^{2}_{L_{2}}
 \, ds ,
$$
where $\partial_{\mu}= \mu^{i}D_{i}$ and 
$\varepsilon>0$ is arbitrary and $N$ depends only on $|\mu|$
(cf. Remark \ref{remark 5.29.1} below).
Then we come to the following counterpart of \eqref{5.31.2}
$$
E \|\tilde{u}^{h}_{t}\|^{2}_{L_{2}}
+E\int_{0}^{t}\sum_{\lambda\in\Lambda}
\|\delta_{h,\lambda}\tilde{u}^{h}_{s}\|^{2}_{L_{2}}\,ds
$$
$$
\leq
 N E\|\tilde{u}_{0}\|^{2}_{L_{2}}+(2d)^{-1}E\int_{0}^{t}
\sum_{\lambda\in\Lambda}
\|D\delta_{h,\lambda}u^{h}_{s}\|^{2}_{L_{2}}\,ds
$$
\begin{equation}
                                                           \label{5.31.7}
+ N
E\int_{0}^{t}\big(\|\tilde{u}^{h}_{s}\|^{2}_{L_{2}}
+\sum_{\lambda\in\Lambda}
\|f^{\lambda}_{s}\|^{2}_{W^{1}_{2}}
+\|g_{s}\|^{2}_{W^{1}_{2}}\big)\,ds.
\end{equation}
Recall that here $\tilde{u}^{h}_{t}$ is 
the derivative of $u^{h}_{t}$
with respect to $x^{i}$. By writing  
\eqref{5.31.7} for all $i=1,...,d$
and summing them up we see that the 
term with the factor $(2d)^{-1}$
is estimated by other terms on the 
right-hand side of \eqref{5.31.7}
and, hence, can be dropped. 
After that  the  already familiar procedure
yields
$$
E\sup_{t\leq T}\|Du^{h}_{t}\|^{2}_{L_{2}}+
E\int_{0}^{T}\sum_{\lambda\in\Lambda}
\|D\delta_{h,\lambda}u^{h}_{t}\|^{2}_{L_{2}}\,dt
$$
\begin{equation}
                                                   \label{5.26.4.9}
\leq N
E\|u_{0}\|^{2}_{W^{1}_{2}}
+ N
E\int_{0}^{T}\big(\sum_{\lambda\in\Lambda}
\|f^{\lambda}_{t}\|^{2}_{W^{1}_{2}}
+\|g_{t}\|^{2}_{W^{1}_{2}}\big)\,dt, 
\end{equation}
which along with \eqref{5.26.4} 
proves
\eqref{5.26.1} with $1$ in place of $\frm $.

Once this step is done the rest is routine.
Assume that $\frm\geq2$ and
 \eqref{5.26.1} is true with $n$ in place 
of $\frm$ for
an integer $n\in[1,\frm-1]$. 
Then we differentiate \eqref{5.26.6}
$n+1$ times and now use the notation $\tilde{\phi}$ for certain
$n+1$-th order derivative of $\phi$ with respect to $x$.
Then we will obtain \eqref{5.31.6} with slightly modified
$\hat{f}^{0}$ and $\hat{g}^{\rho}$.
 Namely, the $\hat{f}^{0}$ will be the sum of $\tilde{f}^{0}$
and the linear combination with constant
coefficients  of certain $i$-th derivatives of 
$\ga^{\lambda\mu}_{t}$ times certain $n+1-i$-th derivatives
of $\delta_{h,\lambda}\delta_{-h,\mu}u^{h}_{t}$. Here $i$
should be restricted to $[1,n+1]$. As above, the $L_{2}$-norms of the
$n+1-i$-th derivatives
of $\delta_{h,\lambda}\delta_{-h,\mu}u^{h}_{t}$ are 
dominated by the
$L_{2}$-norms of the
$n+2-i$-th derivatives
of $\delta_{ h,\lambda }u^{h}_{t}$ 
which are less than the $W^{n+1}_{2}$-norm
of $\delta_{ h,\lambda }u^{h}_{t}$ 
an estimate of which is contained
in \eqref{5.26.1}  with $n$ in place of 
$l$. Similar
changes should be made in $\hat{g}^{\rho}$.
After that we obtain the corresponding counterpart of
\eqref{5.26.4.9} which yields \eqref{5.26.1} with $n+1$
in place of $\frm$. This obviously brings the
proof of the theorem to an end.
 \end{proof}

\begin{lemma}
                                                  \label{lemma 5.27.1}
Let  $n\geq0$ be an integer, let
 $\phi\in W^{n+1}_{2}$, $\psi\in W^{n+2}_{2}$, and $\lambda,
\mu\in\Lambda_{0}$.
Set 
$$
\partial_{\lambda}\phi=\lambda^{i}D_{i}\phi,\quad
\partial_{\lambda\mu}=\partial_{\lambda}\partial_{\mu}.
$$
 Then   we have
\begin{equation}
                                                 \label{5.28.1}
\frac{\partial^{n}}{(\partial h)^{n}}\delta_{h,\lambda}\phi(x)
=\int_{0}^{1}\theta^{n} \partial_{\lambda}^{n+1}\phi
(x+h\theta\lambda)\,d\theta,
\end{equation}
$$
\frac{\partial^{n}}{(\partial h)^{n}}\delta_{h,\lambda}
\delta_{-h,\mu}\psi(x)
$$
\begin{equation}
                                                 \label{5.28.2}
=\int_{0}^{1}\int_{0}^{1}
(\theta_{1}\partial_{\lambda}-\theta_{2}
\partial_{\mu})^{n} \partial_{\lambda \mu}
\psi(x+h(\theta_{1}\lambda-\theta_{2}\mu))
\,d\theta_{1}d\theta_{2},
\end{equation}
for almost all $x\in\bR^{d}$,  for each  $h\in\bR$. 
Furthermore, if 
  $l\geq0$ is an  integer and 
$\phi\in W^{n+2+l}_{2}$ and $\psi\in W^{n+3+l}_{2}$, then
\begin{equation}
                                                 \label{5.28.3}
\big\|\delta_{h,\lambda}\phi-
\sum_{i=0}^{n}
\frac{h^{i}}{(i+1)!}\partial_{\lambda}^{i+1}
\phi\big\|_{W^{l}_{2}}\leq
\frac{|h|^{n+1}}{(n+2)!}
\|\partial_{\lambda}^{n+2}\phi\|_{W^{l}_{2}}
\end{equation}
\begin{equation}
                                                 \label{5.28.4}
\big\|\delta_{h,\lambda}
\delta_{-h,\mu}\psi-\sum_{i=0}^{n}
h^{i} \sum_{r=0}^{i}A_{i,r}
\partial_{\lambda}^{r+1}\partial_{\mu}^{i-r+1}\psi
\big\|_{W^{l}_{2}}\leq N|h|^{n+1}\|\psi\|_{W^{l+n+3}_{2}},
\end{equation}
where $N=N(|\lambda|,|\mu|,d,n)$ and
\begin{equation}                       \label{1.9.9}
A_{i,r}=\frac{(-1)^{i-r}}{(r+1)!(i-r+1)!}.
\end{equation}
\end{lemma}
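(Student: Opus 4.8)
The plan is to treat all four statements as facts about the single real-variable function $g(h):=\delta_{h,\lambda}\phi(x)$ (respectively $G(h):=\delta_{h,\lambda}\delta_{-h,\mu}\psi(x)$) and to read off both the derivative formulas and the Taylor remainders from the fundamental theorem of calculus. I would first establish \eqref{5.28.1} and \eqref{5.28.2} for $\phi,\psi\in C_0^\infty(\bR^d)$, where all manipulations below are classical, and then pass to the stated Sobolev regularity by density, the right-hand sides being continuous in the relevant $W^{l}_{2}$-norms. For the base case $n=0$ of \eqref{5.28.1}, I write $\delta_{h,\lambda}\phi(x)=h^{-1}[\phi(x+h\lambda)-\phi(x)]=\int_0^1\partial_\lambda\phi(x+h\theta\lambda)\,d\theta$ by the fundamental theorem along the segment from $x$ to $x+h\lambda$; note that the integral representation extends $g$ smoothly across $h=0$, which is what makes the later Taylor expansion at the origin legitimate. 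Since $\frac{\partial}{\partial h}\partial_\lambda^{k}\phi(x+h\theta\lambda)=\theta\,\partial_\lambda^{k+1}\phi(x+h\theta\lambda)$, differentiating under the integral sign and inducting on $n$ yields \eqref{5.28.1}. For \eqref{5.28.2} I first represent $\delta_{-h,\mu}\psi(x)=\int_0^1\partial_\mu\psi(x-h\theta_2\mu)\,d\theta_2$, apply $\delta_{h,\lambda}$, and use the fundamental theorem a second time in the direction $\lambda$; this gives the $n=0$ double integral, and each $\partial/\partial h$ then brings down the factor $\theta_1\partial_\lambda-\theta_2\partial_\mu$, producing the general formula by induction.

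For the estimate \eqref{5.28.3} I view $g$ as a $W^{l}_{2}$-valued function and apply Taylor's formula with integral remainder, $g(h)=\sum_{i=0}^n\frac{h^i}{i!}g^{(i)}(0)+\frac{1}{n!}\int_0^h(h-s)^n g^{(n+1)}(s)\,ds$. The derivatives $g^{(i)}$ are exactly those in \eqref{5.28.1}, so $g^{(i)}(0)=\frac{1}{i+1}\partial_\lambda^{i+1}\phi$ from $\int_0^1\theta^i\,d\theta=(i+1)^{-1}$, which identifies the Taylor polynomial with $\sum_{i=0}^n\frac{h^i}{(i+1)!}\partial_\lambda^{i+1}\phi$. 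Substituting $g^{(n+1)}(s)=\int_0^1\theta^{n+1}\partial_\lambda^{n+2}\phi(\cdot+s\theta\lambda)\,d\theta$ into the remainder, taking the $W^{l}_{2}$-norm and pulling it inside by Minkowski's integral inequality, and using translation invariance of $\|\cdot\|_{W^{l}_{2}}$ to replace $\|\partial_\lambda^{n+2}\phi(\cdot+s\theta\lambda)\|_{W^{l}_{2}}$ by $\|\partial_\lambda^{n+2}\phi\|_{W^{l}_{2}}$, the remaining scalar factor $\frac{1}{n!}\int_0^{|h|}(|h|-s)^n\,ds\int_0^1\theta^{n+1}\,d\theta=\frac{|h|^{n+1}}{(n+2)!}$ gives the exact constant claimed.

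The estimate \eqref{5.28.4} is handled identically, and it is here that the constants $A_{i,r}$ emerge. Expanding $(\theta_1\partial_\lambda-\theta_2\partial_\mu)^i=\sum_{r=0}^i\binom{i}{r}(-1)^{i-r}\theta_1^r\theta_2^{i-r}\partial_\lambda^r\partial_\mu^{i-r}$ (the operators commute) and evaluating $G^{(i)}(0)$ through \eqref{5.28.2} with $\int_0^1\theta_1^r\,d\theta_1=(r+1)^{-1}$ and $\int_0^1\theta_2^{i-r}\,d\theta_2=(i-r+1)^{-1}$ produces precisely $\frac{G^{(i)}(0)}{i!}=\sum_{r=0}^i A_{i,r}\partial_\lambda^{r+1}\partial_\mu^{i-r+1}\psi$ with $A_{i,r}$ as in \eqref{1.9.9}, matching the Taylor polynomial. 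For the remainder I bound $\|(\theta_1\partial_\lambda-\theta_2\partial_\mu)^{n+1}\partial_{\lambda\mu}\psi\|_{W^{l}_{2}}$, a differential operator of order $n+3$ with coefficients controlled by $|\lambda|,|\mu|,n$, by $N\|\psi\|_{W^{l+n+3}_{2}}$; translation invariance again removes the shift, and integrating the scalar factors absorbs the remaining constants into $N(|\lambda|,|\mu|,d,n)$.

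The only genuinely delicate point is the passage from smooth $\phi,\psi$ to the stated Sobolev regularity, together with the justification of differentiation under the integral sign and of the Hilbert-space-valued Taylor formula. This is routine once one observes that every right-hand side above is continuous in the appropriate $W^{l}_{2}$-norm, so the identities and estimates extend from $C_0^\infty$ by density; everything else is bookkeeping with the fundamental theorem of calculus and Minkowski's inequality.
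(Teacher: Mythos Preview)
Your proof is correct and follows essentially the same route as the paper's: reduce to $C_0^\infty$ by density, obtain the integral representations \eqref{5.28.1}--\eqref{5.28.2} from the Newton--Leibniz formula and differentiation under the integral, then apply Taylor's formula with integral remainder in $h$ and bound the remainder via Minkowski's inequality and translation invariance of the $W^l_2$-norm. The identification of $A_{i,r}$ through the binomial expansion of $(\theta_1\partial_\lambda-\theta_2\partial_\mu)^i$ and the moment integrals $\int_0^1\theta^k\,d\theta=(k+1)^{-1}$ is exactly what the paper does as well.
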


\begin{proof} 
 Clearly, it suffices to prove the lemma for 
$\phi,\psi\in C_0^{\infty}(\bR^d)$.  For $n=0$ formula \eqref{5.28.1} is obtained
by applying the Newton-Leibnitz formula to $\phi(x+\theta h\lambda)$
as a function of $\theta\in[0,1]$. Applying it one more time
derives \eqref{5.28.2} from \eqref{5.28.1} for $n=0$.
After that for $n\geq1$ one obtains \eqref{5.28.1} and
 \eqref{5.28.2}  by differentiating both parts of
these equations written with $n=1$. 

Next by Taylor's formula for smooth $f(h)$ we have
$$
f(h)=\sum_{i=0}^{n}\frac{h^{i}}{i!}
\frac{d^{i}}{(dh)^{i}}f(0)+\frac{1}{n!}
\int_{0}^{h}(h-\theta)^{n}\frac{d^{n+1}}{(dh)^{n+1}}f(\theta)
\,d\theta.
$$
By applying this to
$$
\delta_{h,\lambda}\phi(x)=\int_{0}^{1}\partial_{\lambda}
\phi(x+h\theta\lambda)\,d\theta
$$
as a function of $h$ we see that
$$
\delta_{h,\lambda}\phi(x)=\sum_{i=0}^{n}
\frac{h^{i}}{(i+1)!}\partial_{\lambda}^{i+1}\phi(x)
$$
$$
+\frac{h^{n+1}}{n!}\int_{0}^{1}\int_{0}^{1}
(1-\theta_{2})^{n}\theta_{1}^{n+1}\partial_{\lambda}^{n+2}
\phi(x+h\theta_{1}\theta_{2}\lambda)\,d\theta_{1}d\theta_{2}.
$$
Now to prove \eqref{5.28.3} it only remains
to use that by Minkowski's inequality the $W^{l}_{2}$-norm
of the last term is less than the $W^{l}_{2}$-norm of
$\partial_{\lambda}^{n+2}\phi$ times
$$
\frac{|h|^{n+1}}{n!}\int_{0}^{1}\int_{0}^{1}
(1-\theta_{2})^{n}\theta_{1}^{n+1}\,d\theta_{1}d\theta_{2}
=\frac{|h|^{n+1}}{(n+2)!}.
$$
Similarly, by observing that the value at $h=0$
of the right-hand side of 
 \eqref{5.28.2} 
is
$$
n!\sum_{r=0}^{n}A_{n,r}\partial_{\lambda}^{r+1}
\partial_{\mu}^{n-r+1}\psi(x)
$$
we see that the left-hand side of \eqref{5.28.4}
is the $W^{l}_{2}$-norm of
$$
\frac{h^{n+1}}{n!}\int_{0}^{1}\int_{0}^{1}\int_{0}^{1}
(1-\theta_{3})^{n}(\theta_{1}\partial_{\lambda}-\theta_{2}
\partial_{\mu})^{n+1}\partial_{\lambda \mu}
\psi(x+h\theta_{3}(\theta_{1}\lambda-\theta_{2}\mu))\,d\theta_{1}
d\theta_{2}d\theta_{3}.
$$
This yields \eqref{5.28.4} in an obvious way.\end{proof}

\begin{remark}
                                               \label{remark 5.29.1}
Formula \eqref{5.28.1} with $n=1$ and Minkowski's
inequality imply that
$$
\|\delta_{h,\lambda}\phi\|_{L_{2}}\leq\|\partial_{\lambda}
\phi\|_{L_{2}}.
$$
By applying this inequality to finite differences of $\phi$
and using induction we easily conclude that
 $W^{l+r}_{2}\subset W^{l,r}_{h,2}$,
 where for integers $l\geq0$ and $r\geq1$
we denote by $W^{l,r}_{h,2}$ the Hilbert 
space of functions $\varphi$ on $\bR^{d}$ 
with the norm $\|\varphi\|_{l,r,h}$ defined by 
\begin{equation}                                                                                   
                                                            \label{3.6.5.9}
\|\varphi\|_{l,r,h}^2= 
\sum_{\lambda_{1},...,\lambda_{r}\in\Lambda}\|\delta_{h,\lambda_{1}}
\cdot...\cdot\delta_{h,\lambda_{r}}\varphi\|^{2}_{W^{l}_{2}}.
\end{equation}
We also set $W^{l,0}_{h,2}=W^{l}_{2}$.
Then 
for any $\phi\in W^{l+r}_{2}$ we have
$$
\|\varphi\|_{l,h,r}\leq N\|\varphi\|_{W^{l+r}_{2}},
$$
where $N$ depends only on 
$|\Lambda_0|^2:=\sum_{\lambda\in\Lambda_0}|\lambda|^2$ and 
$r$.  
\end{remark}

Set 
\begin{equation*}
\cL^{(0)}_{t}=\sum_{\lambda,\mu\in\Lambda}
\ga^{\lambda\mu}_{t}\partial_{\lambda}\partial_{\mu},
\quad
\cM^{(0)\rho}_{t}
=\sum_{\lambda\in\Lambda}\gb^{\lambda \rho}_{t}\partial_{\lambda} 
\end{equation*}
and 
for  integers $n\geq1$ introduce the  operators
$$
\cL^{(n)}_{t}=n!
\sum_{\lambda,\mu\in\Lambda_{0}}\ga^{\lambda\mu}_{t}
\sum_{r=0}^{n}A_{n,r}\partial_{\lambda}^{r+1}
\partial_{\mu}^{n-r+1}+(n+1)^{-1}
\sum_{\lambda\in\Lambda_{0}}\ga^{\lambda0}_{t}\partial_{\lambda}
^{n+1}
$$
$$
+(n+1)^{-1}\sum_{\mu\in\Lambda_{0}}\ga^{0\mu}_{t}
\partial_{\mu}^{n+1} ,
$$
$$
\cM^{(n)\rho}_{t}=(n+1)^{-1}\sum_{\lambda\in\Lambda_{0}}
\gb_{t}^{\lambda \rho}\partial_{\lambda}^{n+1}  ,
$$
$$
\cO^{h(n)}_{t}=L^{h}_{t}-\sum_{i=0}^{n}\frac{h^{i}}{i!}\cL^{(i)}_{t},
\quad
\cR^{h(n)\rho}_{t}=M^{h,\rho}_{t}-
\sum_{i=0}^{n}\frac{h^{i}}{i!}\cM^{(i)\rho}_{t}, 
$$
  where $A_{n,r}$ are defined by  \eqref{1.9.9}.  
\begin{remark}
Formally,  for $n\geq1$  the values  $\cL^{(n)}_{t}\phi$ and $\cM^{(n)\rho}_{t}\phi$ 
 are obtained
as the values at $h=0$ of the $n$-th derivatives in $h$ of $L^{h}_{t}\phi$
and $M^{h,\rho}_{t}\phi$.
\end{remark}

\begin{remark}
                                           \label{remark 5.28.01}
Owing to Assumption \ref{assumption 5.22.1}
we have 
\begin{equation}
                                               \label{5.28.5}
\cL^{(0)}_{t}=\cL_{t},\quad\cM^{(0)\rho}_{t}=\cM^{\rho}_{t}.
\end{equation}
Also observe that in light of Lemma \ref{lemma 5.27.1}, 
under Assumption \ref{assumption 5.19.3},  
for
$\phi\in W^{n+2+l}_{2}$ and $\psi\in W^{n+3+l}_{2}$ 
we have 
$$
\|\cO^{h(n)}_{t}\psi\|_{W^{l}_{2}}\leq N
|h|^{n+1} \|\psi\|_{W^{l+n+3}_{2}},
$$
\begin{equation}
                                               \label{5.28.02}
\|\cR^{h(n)}_{t}\phi\|_{W^{l}_{2}}
\leq N
|h|^{n+1} \|\phi\|_{W^{l+n+2}_{2}},
\end{equation}
where  $N$ denotes constants depending only on 
$n$, $d$, $l$, $A_0,\dots ,A_l$, and
$\Lambda$.  
\end{remark}

Let $k\in[1,m]$ be an integer. 
The functions $u^{(1)}_{t},...,u^{(k)}_{t}$  
we need in \eqref{1.26.4.9} 
will be obtained as the result of embedding of certain
functions
$v^{(i)}$ taking values in certain Sobolev spaces. 
Define $v^{(0)}_{t}$
as the solution of \eqref{3.25.4.9} from Theorem
\ref{theorem 5.25.1} and for finding 
$v^{(1)}$,...,$v^{(k)}$ introduce 
the following system  of stochastic PDEs:
\begin{align}
d v ^{(n)}_{t}=&\big(\cL_{t} v ^{(n)}_{t}+
\sum_{l=1}^{n}C_{n}^{l}\cL^{ (l)}_{t} v
^{(n-l)}_{t}\big)\,dt                       \nonumber\\
&                              
+\big(\cM^{\rho}_{t} v
^{(n)}_{t}+
\sum_{l=1}^{n}C_{n}^{l}\cM^{(l)\rho}_{t} v ^{(n-l)}_{t}
\big)\,dw^{\rho}_{t}, \quad n=1,...,k,      \label{5.28.6}
\end{align}  
 where $C^l_n=n(n-1)\cdot...\cdot(n-l+1)/l!$ 
is the binomial coefficient.  

\begin{theorem}
                                                                    \label{theorem 5.28.1}
 
Let Assumptions \ref{assumption 5.19.2},
 \ref{assumption 5.19.1}, 
  and \ref{assumption 5.19.3} (i) 
 hold, $\frm=m$, and let $1\leq k\leq m$.  
Then there exists a unique set
$ v ^{(1)}_{t},..., v ^{(k)}_{t}$
of solutions of \eqref{5.28.6} with initial condition 
$ v ^{(1)}_{0}=...= v ^{(k)}_{0}=0$ 
and such that
$ v ^{(n)}\in \bW^{m+2-n}_{2}(T)$, $n=1,...,k$. 
Furthermore, 
 with probability one  
$ v ^{(n)}_{t}$ are continuous 
$W^{m+1-n}_{2}$-valued functions
and
 there exists a constant $N$ depending only on 
$T$, $d,\kappa$,
$\Lambda$,  $m$, 
and  $K_{0}$, ..., $K_{m+1}$, 
$A_0$,....,$A_m$   such that for $n=1,...,k$
\begin{equation}
                                                  \label{5.28.7}
E\sup_{t\leq T}\| v ^{(n)}_{t}\|^{2}_{W^{m+1-n}_{2}}
+E\int_{0}^{T}\| v ^{(n)}_{t}\|^{2}_{W^{m+2-n}_{2}}\,dt
\leq N\cK_{m}^{2} .
\end{equation}
\end{theorem}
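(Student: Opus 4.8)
The plan is to solve the triangular system \eqref{5.28.6} recursively, one index $n$ at a time, treating each equation as a single linear SPDE of the type \eqref{3.25.4.9} whose free terms are assembled from the already-constructed lower-order functions. The decisive structural feature is that the operators $\cL_t$ and $\cM^\rho_t$ multiplying the unknown $v^{(n)}_t$ in \eqref{5.28.6} are exactly those of the original equation, while the coupling to $v^{(0)},\dots,v^{(n-1)}$ enters only through the inhomogeneous drift and diffusion terms. Hence Theorem \ref{theorem 5.25.1} applies at each step, once we check that those free terms carry the Sobolev regularity the theorem demands.

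First I would record the mapping properties of the operators $\cL^{(l)}_t$ and $\cM^{(l)\rho}_t$. From their definitions, $\cL^{(l)}_t$ is a combination of coefficients $\ga$ times derivatives of total order at most $l+2$, whereas $\cM^{(l)\rho}_t$ involves derivatives of order at most $l+1$. Since under Assumption \ref{assumption 5.19.3}(i) with $\frm=m$ the coefficients $\ga$ and $\gb$ are bounded together with their derivatives up to order $m$, multiplication by them is bounded on $W^{s}_2$ for every $s\leq m$. Consequently, for $s\leq m$,
$$
\|\cL^{(l)}_t\phi\|_{W^{s-l-2}_2}\leq N\|\phi\|_{W^{s}_2},\qquad
\|\cM^{(l)\rho}_t\phi\|_{W^{s-l-1}_2}\leq N\|\phi\|_{W^{s}_2},
$$
with $N$ depending only on $\Lambda$, $d$, $l$, and $A_0,\dots,A_{l}$; this is the same computation already used in Remark \ref{remark 5.28.01}.

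The induction then runs as follows. Suppose $v^{(0)},\dots,v^{(n-1)}$ have been constructed with $v^{(j)}\in\bW^{m+2-j}_2(T)$ and satisfying \eqref{5.28.7} (for $v^{(0)}$ this is precisely Theorem \ref{theorem 5.25.1}). Set
$$
f^{(n)}_t:=\sum_{l=1}^{n}C_n^l\,\cL^{(l)}_t v^{(n-l)}_t,\qquad
g^{(n)\rho}_t:=\sum_{l=1}^{n}C_n^l\,\cM^{(l)\rho}_t v^{(n-l)}_t.
$$
Since $v^{(n-l)}\in\bW^{m+2-n+l}_2(T)$, the displayed bounds give $\cL^{(l)}_tv^{(n-l)}_t\in W^{m-n}_2$ and $\cM^{(l)\rho}_tv^{(n-l)}_t\in W^{m+1-n}_2$, so that $f^{(n)}\in\bW^{m-n}_2(T)$ and $g^{(n)}\in\bW^{m+1-n}_2(T)$, with norms controlled by $\sum_{l=1}^{n}\|v^{(n-l)}\|_{\bW^{m+2-n+l}_2(T)}$ and hence, by the inductive hypothesis, by $N\cK_m$. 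Applying Theorem \ref{theorem 5.25.1} with $m$ replaced by $m-n$, free terms $f^{(n)},g^{(n)}$, and zero initial data yields a unique $v^{(n)}\in\bW^{m+2-n}_2(T)$, which is a continuous $W^{m+1-n}_2$-valued process and satisfies \eqref{5.28.7} with right-hand side controlled by the $\bW^{m-n}_2(T)$- and $\bW^{m+1-n}_2(T)$-norms of $f^{(n)}$ and $g^{(n)}$, that is, by $N\cK_m^2$. This closes the induction and produces the full set $v^{(1)},\dots,v^{(k)}$; uniqueness follows step by step from the uniqueness in Theorem \ref{theorem 5.25.1}.

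The point requiring care is the index bookkeeping, specifically the fact that the diffusion free term $g^{(n)}$ automatically carries one more derivative of regularity than the drift free term $f^{(n)}$. This is exactly the gap demanded by Theorem \ref{theorem 5.25.1} (where $g$ lives in $W^{m+1}_2$ while $f$ lives in $W^m_2$), and it holds here precisely because $\cM^{(l)\rho}_t$ differentiates one fewer time than $\cL^{(l)}_t$. One must also track coefficient regularity: solving at the level $m-n\geq0$ requires $a,b$ and $\ga,\gb$ to be differentiable only up to order $m-n\leq m$, which is guaranteed by Assumptions \ref{assumption 5.19.2}(i) and \ref{assumption 5.19.3}(i), while the terminal step $n=k=m$ rests on the $L_2$-theory ($m-n=0$) contained in Theorem \ref{theorem 5.25.1}. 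Beyond this bookkeeping the argument is a direct iteration of the solvability and a priori estimate already in hand, so I do not expect a genuinely hard analytic step.
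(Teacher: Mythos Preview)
Your proposal is correct and follows essentially the same route as the paper: a recursive solution of the triangular system, treating each equation for $v^{(n)}$ as an instance of \eqref{3.25.4.9} with free terms built from $v^{(0)},\dots,v^{(n-1)}$, invoking Theorem \ref{theorem 5.25.1} at level $m-n$ once the mapping properties $\cL^{(l)}:W^{s}_2\to W^{s-l-2}_2$, $\cM^{(l)}:W^{s}_2\to W^{s-l-1}_2$ place the free terms in $\bW^{m-n}_2(T)$ and $\bW^{m+1-n}_2(T)$. The only cosmetic point is that your stated range ``for $s\leq m$'' should really constrain the \emph{output} index $s-l-2\leq m$ (which is what the coefficient regularity actually controls); in the induction this index equals $m-n\leq m-1$, so the application is valid.
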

\begin{proof} Notice that for each $n= 1,\dots,k$ the  equation 
for $ v ^{(n)}_{t}$
  does not involve the unknown functions 
$ v ^{(l)}_{t}$ with indices $l>n$. Therefore we can 
prove the solvability of
\eqref{5.28.6}   and 
 the stated properties of  $ v ^{(n)}_{t}$
recursively on $n$. 

 Denote
$$
S^{(n)}=\sum_{i=1}^{n}C^{i}_{n}\cL^{(i)} v ^{(n-i)}, 
\quad 
R^{(n)\rho} =\sum_{i=1}^{n}C^{i}_{n}\cM^{(i)\rho} 
 v ^{(n-i)},
$$
and first let $n=1$.   
By Theorem \ref{theorem 5.25.1}
we have ${v}^{(0)}\in\bW^{m+2}_{2}(T)$, which 
 owing to Assumption \ref{assumption 5.19.3}(i)  yields that
$S^{(1)}\in \bW^{m-1}_{2}(T)$ and
$R^{(1)}  =(R^{(1)\rho})
\in \bW^{m}_{2}(T) $(here we need the assumption
that $\frm=m$).   
Hence, it follows again by Theorem \ref{theorem 5.25.1} that
there exists a unique $ v ^{(1)}\in\bW^{m+1}_{2}(T)$ satisfying
 \eqref{5.28.6}  with  zero initial condition.
Furthermore, $ v ^{(1)}_{t}$ is a continuous $\bW^{m}_{2}$-valued
function (a.s.) and \eqref{5.28.7} holds with $n=1$.
 
Passing to higher $n$ we assume that $m\geq k\geq2$ and    
for an  $n\in\{2,...,k \} $ 
we have found $ v ^{(1)}$,...,$ v ^{(n-1)}$ 
with the asserted properties.  Observe that for 
$i=1,\dots, n$
$$
\|\cL^{(i)}  v ^{(n-i)} 
\|_{\bW^{m-n}_{2}(T)}
\leq N\| v ^{(n-i)} \|_{\bW^{m-n+(i+2)}_{2}(T)}
$$ 
\begin{equation}
                                                  \label{5.28.8}
=
N\| v ^{(n-i)} \|_{\bW^{m+2-(n-i)}_{2}(T)}, 
\end{equation}
$$
\sum_{k=1}^{\infty}\|\cM^{(i)\rho}  v ^{(n-i)}
\|_{\bW^{m-n+1}_{2}(T)}^{2}
\leq N\| v ^{(n-i)}\|_{\bW^{m-n+1+(i+1)}_{2}(T)}^{2}
$$
\begin{equation}
                                                  \label{5.28.9}
=
N\| v ^{(n-i)} \|_{\bW^{m+2-(n-i)}_{2}(T)}^{2}. 
\end{equation}
It follows by the induction hypothesis that
$S^{(n)}\in\bW^{m-n}_{2}(T)$ 
and $R^{(n)}\in\bW^{m-n+1}_{2}(T)$. 
By applying Theorem \ref{theorem 5.25.1} we see that
there exists a unique 
$ v ^{(n)}\in\bW^{m-n+2}_{2}(T)$ 
satisfying
 \eqref{5.28.6}  with  zero initial condition. This theorem
also yields the continuity property of $ v ^{(n)}_{t}$
and an estimate, that combined with \eqref{5.28.8}
and \eqref{5.28.9} and the induction hypothesis yields 
\eqref{5.28.7}. This proves the existence. 
Uniqueness 
is obtained by inspecting the above proof in which each
$ v ^{(n)}$ was found uniquely.                                 \end{proof}

\begin{lemma}
                                            \label{lemma 5.28.5}
 Let Assumptions \ref{assumption 5.19.2}, 
 \ref{assumption 5.19.1}, 
 and \ref{assumption 5.19.3} (i) hold and $\frm=m$.  
 Let $l,k\geq0$ be integers
such that $l+k+1= m$, and let 
$ v ^{(0)},..., v ^{(k)}$ be the functions
from Theorem \ref{theorem 5.28.1}. Set
\begin{equation}                                                                   \label{1.26.10.9}
r^{h}_{t}
= v ^{h}_{t}-v^{(0)}_{t}-
\sum_{1\leq j\leq k}\frac{h^{j}}{j!}
 v ^{(j)}_{t}, 
\end{equation}
  where $v^h$ is the unique $L_2$-valued solution of 
\eqref{5.26.6} with initial condition $u_0$, $f^0=f$ 
and $f^{\mu}=0$ for $\mu\in\Lambda_0$.  
Then $r^{h}_{0}=0$, 
$r^{h}\in\bW^{m -k}_{2}(T)$, and
\begin{equation}
                                                    \label{5.29.1}
dr^{h}_{t}=(L^{h}_{t}r^{h}_{t}+F^{h}_{t})\,dt
+(M^{h, \rho }_{t}r^{h}_{t}
+G^{h, \rho }_{t})
\,dw^{ \rho}_{t},
\end{equation}
where
$$
F^{h}_{t}:=\sum_{j=0}^{k}\frac{h^{j}}{j!}\cO^{h(k-j)}_{t}
 v ^{(j)}_{t},
\quad
G^{h, \rho }_{t}:=\sum_{j=0}^{k}\frac{h^{j}}{j!}
\cR^{h(k-j)  \rho }_{t} v ^{(j)}_{t}.
$$
Finally, $F^{h}\in \bW^{l}_{2}(T)$ and 
$G^{h,\cdot}\in \bW^{l+1}_{2}(T)$.
\end{lemma}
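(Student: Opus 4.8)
The plan is to derive \eqref{5.29.1} purely by linearity from the equations already known for $v^h$ and for $v^{(0)},\dots,v^{(k)}$, reducing the identification of $F^h$ and $G^{h,\rho}$ to a combinatorial rearrangement of binomial coefficients, and then to read off the regularity of the two remainders from the operator estimate \eqref{5.28.02}.

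First I would dispatch the two easy claims. Since $v^h$ solves \eqref{5.26.6} with initial value $u_0$, while $v^{(0)}_0=u_0$ and $v^{(j)}_0=0$ for $1\le j\le k$ by Theorem \ref{theorem 5.28.1}, the defining formula \eqref{1.26.10.9} gives $r^h_0=0$ at once. For the inclusion $r^h\in\bW^{m-k}_2(T)$, Theorem \ref{theorem 5.28.1} provides $v^{(j)}\in\bW^{m+2-j}_2(T)\subset\bW^{m-k}_2(T)$ for $0\le j\le k$, and Theorem \ref{theorem 15.25.3} (used with $\frm=m$) gives $v^h\in\bW^{m}_2(T)\subset\bW^{m-k}_2(T)$; hence the finite linear combination \eqref{1.26.10.9} lies in $\bW^{m-k}_2(T)$.

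The core step is the derivation of the equation. With the chosen data $f^0=f$, $f^\mu=0$ for $\mu\in\Lambda_0$, one has $\delta_{-h,\mu}f^\mu_t=f_t$, so \eqref{5.26.6} for $v^h$ reads $dv^h_t=(L^h_tv^h_t+f_t)\,dt+(M^{h,\rho}_tv^h_t+g^\rho_t)\,dw^\rho_t$. I would subtract $\sum_{j=0}^{k}\tfrac{h^j}{j!}\,dv^{(j)}_t$, inserting \eqref{3.25.4.9} for $v^{(0)}$ and \eqref{5.28.6} for $v^{(1)},\dots,v^{(k)}$; the free terms $f_t,g^\rho_t$ enter only through $j=0$ and cancel. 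To split off $L^h_tr^h_t$ I add and subtract $\sum_{j=0}^k\tfrac{h^j}{j!}L^h_tv^{(j)}_t$, which reduces the drift remainder to
$$
F^h_t=\sum_{j=0}^k\frac{h^j}{j!}L^h_tv^{(j)}_t-\sum_{j=0}^k\frac{h^j}{j!}\cL_tv^{(j)}_t-\sum_{j=1}^k\frac{h^j}{j!}\sum_{l=1}^jC^l_j\cL^{(l)}_tv^{(j-l)}_t ,
$$
where I use $\cL_t=\cL^{(0)}_t$ from \eqref{5.28.5}. The combinatorial heart is the identity $\tfrac{h^j}{j!}C^l_j=\tfrac{h^{l+p}}{l!\,p!}$ with $p:=j-l$, which reindexes the last two sums into the single double sum $\sum_{j=0}^k\sum_{i=0}^{k-j}\tfrac{h^{i+j}}{i!\,j!}\cL^{(i)}_tv^{(j)}_t$; since $\cO^{h(k-j)}_t=L^h_t-\sum_{i=0}^{k-j}\tfrac{h^i}{i!}\cL^{(i)}_t$, this is precisely $F^h_t=\sum_{j=0}^k\tfrac{h^j}{j!}\cO^{h(k-j)}_tv^{(j)}_t$. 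The identical bookkeeping with $\cM^\rho_t=\cM^{(0)\rho}_t$, $\cM^{(l)\rho}_t$ and $\cR^{h(n)\rho}_t$ produces the stated $G^{h,\rho}_t$.

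Finally, for the regularity of the remainders I would apply \eqref{5.28.02} term by term. Because $l+k+1=m$, the Sobolev orders fit exactly: $l+(k-j)+3=m+2-j$ and $(l+1)+(k-j)+2=m+2-j$, so
$$
\|\cO^{h(k-j)}_tv^{(j)}_t\|_{W^l_2}\le N|h|^{k-j+1}\|v^{(j)}_t\|_{W^{m+2-j}_2},\qquad \|\cR^{h(k-j)}_tv^{(j)}_t\|_{W^{l+1}_2}\le N|h|^{k-j+1}\|v^{(j)}_t\|_{W^{m+2-j}_2}.
$$
Since $v^{(j)}\in\bW^{m+2-j}_2(T)$ by Theorem \ref{theorem 5.28.1}, summation over $j$ yields $F^h\in\bW^l_2(T)$ and $G^{h,\cdot}\in\bW^{l+1}_2(T)$. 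The one place demanding care is the index juggling in the combinatorial identity, together with checking that the hypothesis $l+k+1=m$ makes every summand land in a Sobolev space where \eqref{5.28.02} is applicable; once these are verified, the rest is substitution and linearity.
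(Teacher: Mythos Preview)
Your proposal is correct and follows essentially the same approach as the paper: both proofs verify the regularity $r^h\in\bW^{m-k}_2(T)$ from Theorems \ref{theorem 15.25.3} and \ref{theorem 5.28.1}, derive equation \eqref{5.29.1} by subtracting the known equations and performing the same binomial reindexing of the double sum (the paper's $I^h,J^{h,\rho}$ are exactly your third sum), and then invoke Remark \ref{remark 5.28.01} together with Theorem \ref{theorem 5.28.1} for the regularity of $F^h$ and $G^{h,\cdot}$. Your presentation is slightly more streamlined but substantively identical.
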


\begin{proof} 
 Due to Assumptions 
 \ref{assumption 5.19.1} 
and \ref{assumption 5.19.3}(i)  we have 
$v^h\in \bW^{m}_{2}(T)$, and owing to Assumptions 
\ref{assumption 5.19.2}
and 
 \ref{assumption 5.19.1},
by Theorem  \ref{theorem 5.25.1} 
we have $v^{(0)}\in \bW^{m+2}(T)$. 
Hence clearly $r^h\in \bW^{m}(T)$ when $k=0$, 
and   $r^{h}\in\bW^{m -k}_{2}(T)$ follows from
Theorem \ref{theorem 5.28.1} when  $k\geq1$. 
A direct computation shows that
\eqref{5.29.1} holds with $\hat{F}$ 
and $\hat{G}$ in place of $F$
and $G$, respectively, where
$$
\hat{F}^{h}=L^{h} v ^{(0)}-\cL  v ^{(0)}
+\sum_{1\leq j\leq k}L^{h} v ^{(j)}\frac{h^j}{j!}
-\sum_{1\leq j\leq k}\cL  v ^{(j)}\frac{h^j}{j!}-I^{h}, 
$$
$$
G^{h, \rho }=M^{h, \rho } v ^{(0)}-
\cM^{ \rho} 
 v ^{(0)}
+\sum_{1\leq j\leq k}M^{h, \rho } v ^{(j)}\frac{h^j}{j!}
-\sum_{1\leq j\leq k}\cM^{ \rho}  v ^{(j)}\frac{h^j}{j!}
-J^{h, \rho} , 
$$
with
$$
I^{h}=\sum_{1\leq j\leq k}\sum_{i=1}^{j}
\frac{1}{i!(j-i)!}\cL^{(i)} v ^{(j-i)}h^j,
$$
$$
J^{h, \rho }=\sum_{1\leq j\leq k}\sum_{i=1}^{j}
\frac{1}{i!(j-i)!}\cM^{(i) \rho }  v ^{(j-i)}h^j,
$$
 where, as usual, summations over an empty set 
mean zero. 
Notice that 
$$
I^{h}=\sum_{i=1}^k\sum_{j=i}^{k}
\frac{1}{i!(j-i)!}\cL^{(i)} v ^{(j-i)}h^j 
$$
$$
=\sum_{i=1}^k\sum_{l=0}^{k-i}
\frac{1}{i!l!}\cL^{(i)} v ^{(l)}h^{l+i} 
=\sum_{l=0}^{k-1}\frac{h^{l}}{l!}
\sum_{i=1}^{k-l}\frac{h^i}{i!}\cL^{(i)} v ^{(l)}
$$
$$
=\sum_{j=0}^{k}\frac{h^{j}}{j!}
\sum_{i=1}^{ k-j} \frac{h^i}{i!}\cL^{(i)} v ^{(j)}, 
$$
and similarly, 
$$
J^{h, \rho }=\sum_{j=1}^k\sum_{i=1}^{j}
\frac{1}{i!(j-i)!}\cM^{(i) \rho } 
 v ^{(j-i)}h^j=
\sum_{j=0}^{k}\frac{h^{j}}{j!}
\sum_{i=1}^{ k-j}  \frac{h^i}{i!}\cM^{(i)
 \rho }  v ^{(j)}. 
$$
After that the fact that $\hat{F}=F$ and $\hat{G}=G$
follows by simple arithmetics. Finally, the last assertion of the lemma
immediately follows from Remark \ref{remark 5.28.01}
and Theorem \ref{theorem 5.28.1} (see however
the proof of Theorem \ref{theorem 5.29.1}).
\end{proof}
 
\mysection
{Proof of  Theorem \protect\ref{theorem 1.25.10.9} }
                                                                              \label{section proof}
In this section we suppose that $\frm=m$. 
We start with a result which, as will be seen
later, is more general than Theorem 
 \ref{theorem 1.25.10.9} .

\begin{theorem}
                                                     \label{theorem 5.29.1}
 Let 
$
m=l+k+1  
$
for some integers  $l,k\geq0$, and let 
Assumptions \ref{assumption 5.19.2}, 
\ref{assumption 5.19.1},  \ref{assumption 5.19.3}, 
and  \ref{assumption 5.22.1}  hold.  
Then for $r^{k}_{t}$, defined as in Lemma \ref{lemma 5.28.5},
 we have 
\begin{equation}
                                                   \label{5.29.2}
E\sup_{t\leq T}\|r^{h}_{t}\|^{2}_{W^{l}_{2}}
+E\int_{0}^{T}\sum_{\lambda\in\Lambda}
\|\delta_{h,\lambda}r^{h}_{t}\|^{2}_{W^{l}_{2}}\,dt
\leq N|h|^{2(k+1)}\cK_m^{2} ,
\end{equation}
where $N$ depends only on 
 $T$, $d,\kappa$,
$\Lambda$, $m$ 
and $K_{0}$,...,$K_{m+1}$, 
$A_{0}$,..., $A_{ m }$. Moreover,in the situation 
of Example \ref{example 5.22.2} we have $v^{(j)}=0$ in 
\eqref{1.26.10.9} for odd $j\leq k$.  
\end{theorem}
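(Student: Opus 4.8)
The plan is to read off from Lemma~\ref{lemma 5.28.5} that $r^h$ solves an equation of the form \eqref{5.26.6} with small data, and then to invoke the stability estimate of Theorem~\ref{theorem 15.25.3} at the regularity level $l$. Indeed, by Lemma~\ref{lemma 5.28.5} the process $r^h$ satisfies \eqref{5.29.1} with $r^h_0=0$; since $\delta_{-h,0}$ is the unit operator, this is exactly \eqref{5.26.6} with $\frm=l$, initial condition $0$, drift data $f^0=F^h$ and $f^\mu=0$ for $\mu\in\Lambda_0$, and diffusion data $g^\rho=G^{h,\rho}$. These data lie in $\bW^l_2(T)$ (recall $F^h\in\bW^l_2(T)$ and $G^{h,\cdot}\in\bW^{l+1}_2(T)\subset\bW^l_2(T)$ from Lemma~\ref{lemma 5.28.5}), and Assumption~\ref{assumption 5.19.3} holds at level $l\le m$, so by uniqueness $r^h$ is the solution furnished by Theorem~\ref{theorem 15.25.3} and obeys \eqref{5.26.1}, i.e.
\begin{equation*}
E\sup_{t\leq T}\|r^{h}_{t}\|^{2}_{W^{l}_{2}}
+E\int_{0}^{T}\sum_{\lambda\in\Lambda}
\|\delta_{h,\lambda}r^{h}_{t}\|^{2}_{W^{l}_{2}}\,dt
\leq NE\int_{0}^{T}\big(\|F^{h}_{t}\|^{2}_{W^{l}_{2}}
+\|G^{h}_{t}\|^{2}_{W^{l}_{2}}\big)\,dt.
\end{equation*}
It then remains only to show that the right-hand side is $O(|h|^{2(k+1)}\cK_m^2)$.

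Next I would estimate the consistency errors $F^h$ and $G^h$ term by term. Using the defining formula $F^h_t=\sum_{j=0}^k\frac{h^j}{j!}\cO^{h(k-j)}_t v^{(j)}_t$ together with the $\cO$-estimate of Remark~\ref{remark 5.28.01}, each summand satisfies
\begin{equation*}
\Big\|\tfrac{h^{j}}{j!}\cO^{h(k-j)}_{t}v^{(j)}_{t}\Big\|_{W^{l}_{2}}
\leq N\frac{|h|^{j}}{j!}\,|h|^{k-j+1}\|v^{(j)}_{t}\|_{W^{l+k-j+3}_{2}}
=N\frac{|h|^{k+1}}{j!}\|v^{(j)}_{t}\|_{W^{m+2-j}_{2}},
\end{equation*}
where I used $l+k-j+3=m+2-j$, which holds precisely because $m=l+k+1$. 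The crucial observation is that $W^{m+2-j}_2$ is exactly the spatial regularity that Theorem~\ref{theorem 5.28.1} (and Theorem~\ref{theorem 5.25.1} for $j=0$) provides for $v^{(j)}$. An identical computation with $\cR^{h(k-j)}$ and \eqref{5.28.02} bounds $\|G^h_t\|_{W^l_2}$ by $N|h|^{k+1}\sum_{j=0}^k\frac{1}{j!}\|v^{(j)}_t\|_{W^{m+1-j}_2}$. Squaring, applying the Cauchy--Schwarz inequality in $j$, integrating over $[0,T]$, taking expectations, and invoking \eqref{5.28.7} (and the bound of Theorem~\ref{theorem 5.25.1} for $v^{(0)}$) then yields
\begin{equation*}
E\int_{0}^{T}\big(\|F^{h}_{t}\|^{2}_{W^{l}_{2}}
+\|G^{h}_{t}\|^{2}_{W^{l}_{2}}\big)\,dt
\leq N|h|^{2(k+1)}\sum_{j=0}^{k}E\int_{0}^{T}
\|v^{(j)}_{t}\|^{2}_{W^{m+2-j}_{2}}\,dt
\leq N|h|^{2(k+1)}\cK_m^{2},
\end{equation*}
which combined with the stability bound above gives \eqref{5.29.2}.

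For the final assertion I would argue by induction on $n$ that $v^{(n)}=0$ for odd $n\le k$ in the setting of Example~\ref{example 5.22.2}. The starting point is that there the scheme \eqref{5.20.3} is invariant under $h\mapsto-h$, so $L^h_t$ and $M^{h,\rho}_t$ are even functions of $h$; since $\cL^{(n)}_t$ and $\cM^{(n)\rho}_t$ are (up to the factor $n!$) the $n$-th $h$-derivatives of $L^h_t$ and $M^{h,\rho}_t$ at $h=0$, they vanish for all odd $n$. Feeding this into the recursion \eqref{5.28.6}, for odd $n$ every forcing term $C_n^l\cL^{(l)}_t v^{(n-l)}_t$ and its $\cM$-analogue is zero, because $\cL^{(l)}=0$ unless $l$ is even, while for even $l$ the index $n-l$ is odd and $<n$, so $v^{(n-l)}=0$ by the induction hypothesis. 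Hence $v^{(n)}$ solves a homogeneous equation with zero initial data, and $v^{(n)}=0$ by the uniqueness part of Theorem~\ref{theorem 5.28.1}.

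The part I expect to require the most care is the regularity bookkeeping in the consistency estimate: one must check that each application of $\cO^{h(k-j)}$ and $\cR^{h(k-j)}$ in Remark~\ref{remark 5.28.01} asks for no more derivatives of $v^{(j)}$ than Theorem~\ref{theorem 5.28.1} supplies, and it is the exact identity $m=l+k+1$ that makes the index $l+(k-j)+3=m+2-j$ land on the available space $W^{m+2-j}_2$. Everything else---the reduction to \eqref{5.26.6} via the fact that $\delta_{-h,0}$ is the unit operator, the collection of the powers of $h$, and the induction in the symmetric case---is routine once this alignment is in place.
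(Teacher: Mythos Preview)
Your proof of the main estimate \eqref{5.29.2} is essentially identical to the paper's: both apply the stability bound of Theorem~\ref{theorem 15.25.3} to the equation \eqref{5.29.1} furnished by Lemma~\ref{lemma 5.28.5}, then estimate $F^h$ and $G^h$ term by term via Remark~\ref{remark 5.28.01} and Theorem~\ref{theorem 5.28.1}, with the index identity $l+(k-j)+3=m+2-j$ doing the work.

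For the final assertion about odd $j$ in Example~\ref{example 5.22.2}, your route differs from the paper's. You argue bottom-up: the evenness of $L^h_t$ and $M^{h,\rho}_t$ in $h$ forces $\cL^{(n)}=\cM^{(n)\rho}=0$ for odd $n$, and then an induction through the recursion \eqref{5.28.6} shows each odd $v^{(n)}$ solves a homogeneous equation with zero data. The paper instead argues top-down: from the invariance of \eqref{5.20.3} under $h\mapsto -h$ and uniqueness one has $v^h=v^{-h}$, so $r^h_t-r^{-h}_t=-2\sum_{j\ \text{odd},\,j\le k}\frac{h^j}{j!}v^{(j)}_t$, and the already-proved bound \eqref{5.29.2} (applied at $h$ and $-h$) forces each odd coefficient to vanish. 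Both arguments are correct; the paper's is shorter and avoids verifying the symmetry of the individual operators $\cL^{(n)}$, while yours is more self-contained and does not rely on the estimate just established.
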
   

\begin{proof} By Lemma \ref{lemma 5.28.5} we have
$F^{h}\in \bW^{l}_{2}(T)$ and $
G^{h,\cdot}\in \bW^{l+1}_{2}(T)$, which by 
 Lemma  \ref{lemma 5.28.5}
and Theorem \ref{theorem 15.25.3} 
yields   
that the left-hand side of
\eqref{5.29.2} is dominated by
\begin{equation}
                                                      \label{5.29.4}
 NE\int_{0}^{T}(\|F^{h}_{t}\|^{2}_{W^{l}_{2}}
+\|G^{h}_{t}\|^{2}_{W^{l}_{2}})\,dt.
\end{equation}
To estimate \eqref{5.29.4} we observe that for $j\leq k$
by Remark \ref{remark 5.28.01} we have
$$
\|\cO^{h(k-j)}_{t}u^{(j)}_{t}\|_{W^{l }_{2}}
\leq N|h|^{k-j+1}\|u^{(j)}_{t}\|_{W^{l +k-j+3}_{2}}
=N|h|^{k-j+1}\|u^{(j)}_{t}\|_{W^{m+2-j}_{2}}.
$$
Upon combining this result with Theorem \ref{theorem 5.28.1}
we see that
$$
E\int_{0}^{T}\|F^{h}_{t}\|^{2}_{W^{l}_{2}}\,dt
\leq N|h|^{2(k+1)}\cK_m^{2} .
$$
Similarly one can estimate the remaining part
of \eqref{5.29.4} thus proving 
 estimate \eqref{5.29.2}. Finally, observe 
 that in Example \ref{example 5.22.2} we have 
$v^{h}=v^{-h}$ due to the uniqueness of 
the $L_2$-valued solution for equation 
\eqref{5.20.3} with initial condition 
$u_0$. 
Hence  \eqref{5.29.2} yields $v^{(j)}=0$ for 
odd $j\leq k$. 
\end{proof}

By Sobolev's theorem on embedding of
$W^l_2$ into $C_b$  for $l>d/2$ there exists 
a linear operator $I:W^l_2\to C_b$  
 such that $I\varphi(x)=\varphi(x)$ for almost every 
$x\in\bR^d$  and 
$$
\sup_{\bR^d}|I\varphi|\leq N\|\varphi\|_{W^l_2}
$$ 
for all $\varphi\in W^l_2$, where $N$ 
is a constant depending only on $d$ and $l$. One  
has also the following  lemma on the embedding 
 $W^l_2\subset l_2(\mathbb G_h)$,  
 that we have already referred to, when we used 
Remark \ref{remark 5.25.2} on the existence 
of a unique $l_2(\mathbb G_h)$-valued continuous solution 
$\{u_{t}(x):x\in\bG_{h}\}$ to equation \eqref{5.20.3}. 

\begin{lemma}                                                 \label{lemma 2.27.4.9}
For all $\varphi\in W^{l}_2(\bR^d)$, $l>d/2$,  
$|h|\in(0,1)$
\begin{equation}                            \label{7.27.4.9}   
\sum_{x\in\mathbb G_h}|I\varphi(x)|^2|h|^d\leq N \|\varphi\|_{W^l_2}^2, 
\end{equation}
where $N$ is a constant depending only on $d$
and $l$. 
\end{lemma}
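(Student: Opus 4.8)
The plan is to combine a rescaled Sobolev embedding inequality on balls of radius proportional to $|h|$ with the elementary fact that the grid $\bG_h$ has minimal spacing of order $|h|$, so that these balls can be arranged to be pairwise disjoint. The point is that the weight $|h|^d$ in the discrete norm is precisely what the rescaling of the embedding constant produces, while disjointness allows the resulting local integrals to be reassembled into a single global $W^l_2$ norm over $\bR^d$.

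First I would record the geometric input. Since $\bG_h=h\bG_1$, where $\bG_1$ is the additive group generated by $\Lambda\cup(-\Lambda)$, distinct points of $\bG_h$ are separated by at least $\rho_0|h|$, where $\rho_0>0$ is the minimal distance between distinct points of $\bG_1$ and depends only on $\Lambda$. Fixing $c=\min(1,\rho_0/2)$ and setting $r=c|h|$, for $|h|\in(0,1)$ one has $r\leq1$, and the balls $\{B_r(x):x\in\bG_h\}$ are pairwise disjoint. Next I would establish the pointwise bound from Sobolev's embedding on the unit ball $B_1$ (valid since $l>d/2$): there is $N=N(d,l)$ with $|w(0)|^2\leq N\sum_{|\beta|\leq l}\|D^{\beta}w\|^2_{L_2(B_1)}$ for all $w\in W^l_2(B_1)$. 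Applying this to the continuous representative $w(z)=(I\varphi)(x+rz)$ and changing variables back yields
$$
|I\varphi(x)|^2\leq N\sum_{|\beta|\leq l}r^{2|\beta|-d}\|D^{\beta}\varphi\|^2_{L_2(B_r(x))}\leq Nr^{-d}\sum_{|\beta|\leq l}\|D^{\beta}\varphi\|^2_{L_2(B_r(x))},
$$
where in the last step I used $r\leq1$ and $2|\beta|-d\geq -d$. Here it is the continuous modification furnished by $I$ that makes the pointwise evaluation legitimate.

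Finally I would multiply by $|h|^d$, use $r^{-d}|h|^d=c^{-d}$, and sum over $x\in\bG_h$; disjointness of the balls converts $\sum_{x}\|D^{\beta}\varphi\|^2_{L_2(B_r(x))}$ into a bound by $\|D^{\beta}\varphi\|^2_{L_2(\bR^d)}$, so that $\sum_{x\in\bG_h}|I\varphi(x)|^2|h|^d\leq Nc^{-d}\|\varphi\|^2_{W^l_2}$, which is \eqref{7.27.4.9}. I expect the only genuine obstacle to be the geometric input rather than the analysis: one must know that $\bG_h$ is discrete with spacing of order $|h|$, i.e.\ that $\bG_1$ is a genuine lattice, since this is exactly what keeps the sum finite and pins down the constant. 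Strictly speaking this makes the constant depend on $\Lambda$ through $\rho_0$ as well as on $d$ and $l$; everything else is the standard $r$-scaling of the embedding constant together with the cancellation $r^{-d}|h|^d=c^{-d}$.
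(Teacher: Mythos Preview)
Your argument is correct and follows the same strategy as the paper: a rescaled Sobolev embedding on balls of radius comparable to $|h|$ centered at grid points, followed by summation over $\bG_h$. The paper uses balls of radius exactly $|h|$ and leaves the bounded-overlap step implicit, whereas you shrink the radius by a factor $c=\min(1,\rho_0/2)$ to force disjointness; your remark that the constant must depend on $\Lambda$ through the lattice geometry of $\bG_1$ is well taken and is a point the paper's statement glosses over.
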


\begin{proof}
By Sobolev's embedding of $W^l_{2}$ into $C_b$, 
for $z\in\mathbb R^d$ and  smooth $\varphi$   
we have 
$$
|\varphi(z)|^2\leq \sup_{x\in B_1(0)}\varphi^2(z+hx)
\leq N \sum_{|\alpha|\leq l}h^{2|\alpha|}
\int_{B_1(0)}|(D^{\alpha}\varphi)(z+hx)|^2\,dx
$$
$$
=N \sum_{|\alpha|\leq l}|h|^{2|\alpha|-d}
\int_{B_h(z)}|(D^{\alpha}\varphi)(x)|^2\,dx
\leq N|h|^{-d}\sum_{|\alpha|\leq l}
\int_{B_h(z)}|(D^{\alpha}\varphi)(x)|^2\,dx, 
$$
with a constant  $N=N(d,l)$, where  $B_r(z)=\{x\in\bR^d:|x-z|<r\}$.   
Thus 
$$
\sum_{z\in\mathbb G_h}|\varphi(z)|^2|h|^d
\leq N
\sum_{|\alpha|\leq l}\sum_{z\in\mathbb G_h}
\int_{B_h(z)}|(D^{\alpha}\varphi)(x)|^2\,dx
$$
that yields \eqref{7.27.4.9}.  
\end{proof}

Set $R^{h}_t=Ir^{h}_t$.  
Recall that  $\Lambda^0=\{0\}$, $\delta_{h,0}$ is the identity operator and 
$
\delta_{h,\lambda}=\delta_{h,\lambda_1}
\cdot...\cdot \delta_{h,\lambda_n} 
$
for $(\lambda_1,\dots,\lambda_n)\in \Lambda^n$, $n\geq1$.   
Then we have the following corollary of 
Theorem \ref{theorem 5.29.1}

\begin{corollary}                                                            \label{corollary 1.12.10.9}
Let the assumptions of Theorem \ref{theorem 5.29.1} 
hold with $l>n+d/2$ for some integer $n\geq0$. Then 
for $\lambda\in\Lambda^n$ we 
have 
$$
E\sup_{t\in[0,T]}\sup_{x\in\bR^d}
|\delta_{h,\lambda} R^h_{t}( x)|^2
\leq 
Nh^{2(k+1)}\cK_m^2
$$
$$
E\sup_{t\in[0,T]}\sum_{x\in\mathbb G_h}
|\delta_{h,\lambda}R^h_{t}( x)|^2|h|^d
\leq 
Nh^{2(k+1)}\cK_m^2
$$  
with a constant $N$ depending only on 
$\Lambda$, $d$, $m$, $K_0$, ..., $K_{m+1}$, 
$A_{0}$,..., $A_{ m }$, 
$\kappa$, and $T$. 
\end{corollary}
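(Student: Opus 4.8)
The plan is to reduce both estimates to the already-proven $W^l_2$-bound of Theorem \ref{theorem 5.29.1}, by combining it with the finite-difference estimate of Remark \ref{remark 5.29.1} and with the two embeddings $W^{l'}_2\hookrightarrow C_b$ and $W^{l'}_2\subset l_2(\bG_h)$ (Lemma \ref{lemma 2.27.4.9}). First I would record the fact that the embedding operator $I$ commutes with the finite-difference operators: since each $\delta_{h,\lambda_i}$ is built from the shift $\varphi(\cdot)\mapsto\varphi(\cdot+h\lambda_i)$, and taking the continuous representative commutes with shifts, for $\lambda=(\lambda_1,\dots,\lambda_n)\in\Lambda^n$ one has $\delta_{h,\lambda}R^h_t=\delta_{h,\lambda}Ir^h_t=I\delta_{h,\lambda}r^h_t$. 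This, together with the continuity of $r^h_t$ as a $W^l_2$-valued process and the boundedness of $I$ (so that $R^h=Ir^h$ is a continuous $C_b$-valued process and the suprema over $t$ are meaningful), is the only point where the distinction between a Sobolev element and its continuous version needs a word, and it is routine.

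The key index bookkeeping is as follows. Set $l'=l-n$, so that the hypothesis $l>n+d/2$ reads exactly $l'>d/2$. Applying Remark \ref{remark 5.29.1} with $r=n$ and with $W^{l'}_2$ as the base space (so that $l'+n=l$) gives, for each $\lambda\in\Lambda^n$,
$$
\|\delta_{h,\lambda}r^h_t\|_{W^{l'}_2}\leq N\|r^h_t\|_{W^l_2},
$$
with $N=N(|\Lambda_0|,n)$ (a single multi-index term is dominated by the full sum defining the norm in that remark). Thus the $n$ finite differences cost $n$ Sobolev derivatives but are controlled uniformly in $h$, and exactly the margin $l'>d/2$ survives for the two embeddings.

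For the supremum estimate I would then use Sobolev's embedding $W^{l'}_2\hookrightarrow C_b$ together with the commutation identity to write, for each fixed $(\omega,t)$,
$$
\sup_{x\in\bR^d}|\delta_{h,\lambda}R^h_t(x)|
=\sup_{x\in\bR^d}|I\delta_{h,\lambda}r^h_t(x)|
\leq N\|\delta_{h,\lambda}r^h_t\|_{W^{l'}_2}
\leq N\|r^h_t\|_{W^l_2}.
$$
Squaring, taking $\sup_{t\leq T}$ and then the expectation, and invoking the term $E\sup_{t\leq T}\|r^h_t\|^2_{W^l_2}$ of \eqref{5.29.2}, yields the first inequality. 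For the discrete estimate I would argue identically, replacing the embedding into $C_b$ by Lemma \ref{lemma 2.27.4.9} (applicable since $l'>d/2$ and $|h|\in(0,1)$): for each $(\omega,t)$,
$$
\sum_{x\in\bG_h}|\delta_{h,\lambda}R^h_t(x)|^2|h|^d
\leq N\|\delta_{h,\lambda}r^h_t\|^2_{W^{l'}_2}
\leq N\|r^h_t\|^2_{W^l_2},
$$
after which taking $\sup_{t\leq T}$, then $E$, and again using \eqref{5.29.2} completes the argument.

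There is essentially no genuine analytic obstacle here beyond keeping the Sobolev indices straight; all the hard work is contained in Theorem \ref{theorem 5.29.1}. The only steps requiring care are the commutation of $I$ with $\delta_{h,\lambda}$ described above and the correct use of the \emph{supremum-in-time} part of \eqref{5.29.2} (rather than its integrated part), since the desired bounds involve $E\sup_t$. The dependence of the final constant $N$ on $\Lambda,d,m,K_0,\dots,K_{m+1},A_0,\dots,A_m,\kappa,T$ is then inherited directly from that of Theorem \ref{theorem 5.29.1} together with the purely geometric constants from the embeddings and from Remark \ref{remark 5.29.1}.
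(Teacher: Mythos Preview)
Your proof is correct and follows essentially the same route as the paper: set $l'=l-n>d/2$, use Remark \ref{remark 5.29.1} to pass from $\|\delta_{h,\lambda}r^h_t\|_{W^{l'}_2}$ to $\|r^h_t\|_{W^l_2}$, then apply the Sobolev embedding into $C_b$ (respectively Lemma \ref{lemma 2.27.4.9}) and conclude via the $E\sup_t$ part of \eqref{5.29.2}. Your explicit remark on the commutation of $I$ with $\delta_{h,\lambda}$ is a detail the paper leaves implicit.
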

 
\begin{proof} 
Set $j=n-l$. Then $j>d/2$ and  using  
Sobolev's theorem on embedding 
$W^{j}_2$ into $C_b$ and  taking into account  
Remark  \ref{remark 5.29.1}, 
from Theorem \ref{theorem 5.29.1} we get 
$$
E\sup_{t\in[0,T]}\sup_{x\in\bR^d}|\delta_{h,\lambda}R^h_{t}( x)|^2
\leq C_1E\sup_{t\in[0,T]}\|R^h_{t}\|_{j,h,n}^2
$$
$$
\leq C_2E\sup_{t\in[0,T]}\|R^h_{t}\|_{W^l_2}^2
\leq 
Nh^{2(k+1)}\cK_m^2,
$$
 where $C_1$ and $C_2$ are constants 
depending only on $m$ and $d$, and 
$N$  is a constant  depending only on $m$, $d$, $T$, $\kappa$, 
$\Lambda$ and $K_i$ for 
$i\leq m+1$. 
Similarly, by Lemma \ref{lemma 2.27.4.9} and 
Remark  \ref{remark 5.29.1}
$$
E\sup_{t\in[0,T]}
\sum_{x\in\mathbb G_h}|\delta_{h,\lambda}R^h_{t}( x)|^2|h|^d
\leq C_1E\sup_{t\in[0,T]}\|\delta_{h,\lambda}R^h_{t}\|_{W_2^j}^2
$$
$$
\leq C_2E\sup_{t\in[0,T]}\|R^h_{t}\|_{W^l_2}^2
\leq 
Nh^{2(k+1)}\cK_m^2. 
$$
\end{proof}

Now we show that 
Theorem \ref{theorem 1.25.10.9} follows 
from the above corollary.  We define 
$$
\hat u^h=Iv^h,\quad u^{(j)}=Iv^{(j)}, \quad j=0,...,k, 
$$
where $v^h$ is the unique $\cF_t$-adapted 
continuous $L_2(\bR^d)$-valued solution of equation \eqref{5.20.3} 
with initial condition $u_0$, the processes 
$v^{(0)}$,...,$v^{(k)}$ are given by Theorem 
\ref{theorem 5.28.1},  and
$I$ is the embedding operator from $W^l_{2}$ into $C_b$. 
By virtue of Theorem \ref{theorem 15.25.3}, 
$v^h$ is a continuous 
$W^l_2$-valued process, and 
by 
Theorem \ref{theorem 5.28.1}
$v^{(j)}$, $j=1,2,...,k$,  are 
$W^{n+1-k}_2$-valued continuous processes. 
Since 
$l>d/2$ and $n+1-k>d/2$, the processes 
$\hat u^{h}$ and $u^{(j)}$ are 
well-defined and clearly 
\eqref{1.26.10.9} implies \eqref{1.26.4.9}
with $\hat{u}^{h}$ in place of $u^{h}$.  
To show that Corollary \ref{corollary 1.12.10.9}
yields Theorem \ref{theorem 1.25.10.9} 
 we need only show that almost surely 
\begin{equation}                                                                      \label{1.22.10.9}
\hat u^{h}_t(x)=u_t^h(x)
\quad \text{for all $t\in[0,T]$} 
\end{equation}
for each $x\in \mathbb G_h$, where $u^h$ is the unique 
$\cF_t$-adapted $l_2$-valued continuous solution 
of \eqref{5.20.3}. 
To see this let $\varphi$ be a compactly 
supported nonnegative smooth function on $\mathbb R^d$ with unit integral, 
and for a fixed $x\in \mathbb G_h$ set 
\begin{equation*}                                         
\varphi_{\varepsilon}(y)
=\varphi((y-x)/\varepsilon)
\end{equation*}
for $y\in\bR^d$ and $\varepsilon>0$. 
Since $\hat u^h$ is a continuous $L_2$-valued solution of 
\eqref{5.20.3}, for each $\varepsilon$ almost surely 
$$
\int_{\bR^d}\hat u^h_t(y)\varphi_{\varepsilon}(y)\,dy
=\int_{\bR^d}\hat u(y)\varphi_{\varepsilon}(y)\,dy
+\int_0^t\int_{\bR^d}(L^{h}_{s}\hat u^{h}_{s}(y)
+f_{s}(y))\varphi_{\varepsilon}(y)\,dy\,ds
$$
$$
+\int_0^t\int_{\bR^d}
(M^{h,\rho}_{s}\hat u^{h}_{s}(y)
+g^{\rho}_{s}(y))\varphi_{\varepsilon}(y)\,dy\,dw^{\rho}_s
$$
 for all $t\in[0,T]$. 
Letting here $\varepsilon\to0$ we see 
 that both sides 
 converge  in probability, 
 uniformly in $t\in[0,T]$, and thus we 
 get that almost surely 
 $$
\hat u^h_t(x)
= u_0(x)
+\int_0^t\big[L^{h}_{s}\hat u^{h}_{s}(x)
+f_{s}(x)\big]\,ds+
\int_0^t\big[
M^{h,\rho}_{s}\hat u^{h}_{s}(x)
+g^{\rho}_{s}(x)\big]\,dw^{\rho}_s
$$
 for all $t\in[0,T]$. (Remember that $u_0$, $f$ 
 and $g$ are continuous in $x$ 
 by virtue of Remark \ref{remark 1.27.10.9}.) 
  Moreover, owing to Lemma \ref{lemma 2.27.4.9} 
 the restriction of $\hat u_t$ onto $\mathbb G_h$ is a continuous 
 $l_2(G_h)$-valued process. Hence, because of the uniqueness 
 of the $l_2(\mathbb G_h)$-valued continuous $\cF_t$-adapted solution of 
 \eqref{5.20.3} for any $l_2$-valued $\cF_0$-measurable 
 initial condition, we have \eqref{1.22.10.9}, that finishes the proof 
 Theorem~\ref{theorem 1.25.10.9}.

 Theorem \ref{theorem 1.25.10.9} yields the following 
generalisation of Theorem  \ref{theorem 5.25.3}.

\begin{theorem}                  
Let the conditions of Theorem \ref{theorem 1.25.10.9} hold
with $n=0$. 
Then 
$$
E\sup_{t\leq T}\sup_{x\in\bG_h}|\bar u^h_{t}( x)-
u^{(0)}_{t}( x)|^{2}
$$
\begin{equation}                                     \label{1.25.10.9}
+
E\sup_{t\leq T}\sum_{x\in\bG_h}|\bar u^h_{t}( x)-
u^{(0)}_{t}( x)|^{2}|h|^{d}
\leq N |h|^{2(k+1)}\cK_m^{2} , 
\end{equation} 
where $\bar u^h$ is defined by \eqref{12.25.11.08} and  
$N$ depends only on $\Lambda$, $d$, $m$, $K_0$, ..., $K_{m+1}$, 
$A_{1}$,...,$A_{m}$, 
$\kappa$, and $T$. In the situation of Example \ref{example 5.22.2} 
estimate \eqref{1.25.10.9} holds also for $\tilde u^h$, defined by 
\eqref{12.25.11.08}, in place of $\bar u$. 
\end{theorem}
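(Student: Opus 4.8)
The plan is to mirror the proof of Theorem \ref{theorem 5.25.3}, but to carry out the cancellation at the level of the $W^l_2$-valued remainders and only then pass to the grid through the two embeddings already at our disposal, so that the discrete $l_2(\bG_h)$-norm comes out with the correct weight $|h|^d$. Throughout I take $|h|\in(0,1)$ and write $m=l+k+1$; the hypothesis $n=0$ in Theorem \ref{theorem 1.25.10.9} gives $m>k+1+d/2$, hence $l>d/2$, which is exactly what Sobolev's embedding and Lemma \ref{lemma 2.27.4.9} require.

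First I would recall from the proof of Theorem \ref{theorem 1.25.10.9} that on $\bG_h$ one has $u^{2^{-j}h}=Iv^{2^{-j}h}$ and $u^{(j)}=Iv^{(j)}$, where $v^{2^{-j}h}$ is the $W^l_2$-valued solution of \eqref{5.26.6} and the $v^{(j)}$ are the processes from Theorem \ref{theorem 5.28.1}. Since $I$ is linear, for $\bar u^h$ of \eqref{11.25.11.08} we get $\bar u^h-u^{(0)}=I(\bar v^h-v^{(0)})$ on $\bG_h$, with $\bar v^h:=\sum_{j=0}^k b_j v^{2^{-j}h}$. Applying the definition \eqref{1.26.10.9} of $r^h$ with $2^{-j}h$ in place of $h$ gives $v^{2^{-j}h}=v^{(0)}+\sum_{i=1}^k \tfrac{(2^{-j}h)^i}{i!}v^{(i)}+r^{2^{-j}h}$, and summing against $b_j$ while using the identities $\sum_j b_j=1$ and $\sum_j b_j2^{-ij}=0$ for $i=1,\dots,k$ (which follow from \eqref{1.26.11.08} exactly as in Theorem \ref{theorem 5.25.3}) collapses all intermediate terms, leaving $\bar v^h-v^{(0)}=\sum_{j=0}^k b_j r^{2^{-j}h}$.

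Next I would estimate this combined remainder in $W^l_2$. Theorem \ref{theorem 5.29.1}, applied with step $2^{-j}h$ (its constant $N$ being independent of the step), yields $E\sup_{t\le T}\|r^{2^{-j}h}_t\|^2_{W^l_2}\le N|2^{-j}h|^{2(k+1)}\cK_m^2\le N|h|^{2(k+1)}\cK_m^2$, and since there are finitely many bounded coefficients $b_j$, the triangle inequality gives $E\sup_{t\le T}\|\bar v^h_t-v^{(0)}_t\|^2_{W^l_2}\le N|h|^{2(k+1)}\cK_m^2$. I would then apply, pointwise in $(\omega,t)$, Sobolev's embedding $W^l_2\hookrightarrow C_b$ and Lemma \ref{lemma 2.27.4.9} to the single function $\bar v^h_t-v^{(0)}_t$; taking the supremum in $t$, then expectations, and using $\sup_{x\in\bG_h}\le\sup_{x\in\bR^d}$ together with $\bar u^h-u^{(0)}=I(\bar v^h-v^{(0)})$ on $\bG_h$, produces both terms on the left of \eqref{1.25.10.9}.

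For the symmetric scheme of Example \ref{example 5.22.2} I would invoke the last assertion of Theorem \ref{theorem 5.29.1}, namely $v^{(j)}=0$ for odd $j\le k$, so that only the even powers $(2^{-j}h)^{2i}$ survive in the expansion of $v^{2^{-j}h}$. Replacing the Vandermonde system in the nodes $2^{-j}$ by the one in $4^{-j}=(2^{-j})^2$, i.e.\ using the coefficients $\tilde b_j$ of \eqref{12.25.11.08} with $\tilde k=[k/2]$, annihilates exactly the surviving orders $v^{(2)},v^{(4)},\dots,v^{(2\tilde k)}$, so that $\tilde v^h-v^{(0)}=\sum_{j=0}^{\tilde k}\tilde b_j r^{2^{-j}h}$ and the two preceding steps apply verbatim. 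The only point that needs real care—and the sole obstacle—is the weight bookkeeping in the discrete norm: each $r^{2^{-j}h}$ naturally lives on the finer grid $\bG_{2^{-j}h}\supset\bG_h$ with weight $|2^{-j}h|^d$, so summing grid-wise bounds directly would introduce stray factors $2^{jd}$. Performing the cancellation in $W^l_2$ first and only then embedding the combined remainder $\bar v^h-v^{(0)}$ via Lemma \ref{lemma 2.27.4.9}, which is stated for $\bG_h$ with weight $|h|^d$, is precisely what sidesteps this.
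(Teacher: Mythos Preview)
Your proof is correct and follows essentially the same route the paper has in mind: the Vandermonde cancellation of Theorem \ref{theorem 5.25.3} combined with the remainder estimates that ultimately come from Theorem \ref{theorem 5.29.1}, then the two embeddings (Sobolev and Lemma \ref{lemma 2.27.4.9}) to reach the grid. Your choice to perform the cancellation at the $W^l_2$ level and only afterwards apply Lemma \ref{lemma 2.27.4.9} to the single function $\bar v^h_t-v^{(0)}_t$ is a clean way to handle the discrete $l_2(\bG_h)$ term; it is worth noting, though, that the ``stray $2^{jd}$'' issue is not fatal even in the direct approach, since Lemma \ref{lemma 2.27.4.9} applies to any $W^l_2$ function on the fixed grid $\bG_h$ and hence can be applied to each $r^{2^{-j}h}_t$ individually with weight $|h|^d$---but your ordering is tidier.
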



\begin{thebibliography}{mm}
 
\bibitem{BT}V. Bally and D. Talay,
{\em The law of the Euler scheme for stochastic differential equations. I. Convergence rate of the distribution function}, 
Probab. Theory Related Fields 104 (1996), no. 1, 43--60. 

\bibitem{Br} C. Brezinski, {\em Convergence acceleration during 
the 20th century}, Journal of Computational and Applied 
Mathematics, Vol. 122 (2000), 1--21. 

\bibitem{DK} Hongjie Dong and 
N.V. Krylov,
{\em On the rate of convergence
of finite-difference approximations for
degenerate linear parabolic equations
with $C^{1}$ and $C^{2}$ coefficients\/},  
Electron. J. Diff. Eqns.,
Vol. 2005(2005), No. 102, pp. 1-25.
http://ejde.math.txstate.edu


\bibitem{J} D.C. Joyce, {\em Survey of extrapolation 
processes in numerical analysis}, SIAM Review, Vol. 13 (1971), 
No. 4, 435--490. 

\bibitem{GK4}I. Gy\"ongy and N.V. Krylov, 
{\em An accelerated splitting-up method for parabolic equations\/}, 
SIAM Journal on Mathematical Analysis, {\bf 37}
(2006), 1070--1097. 

\bibitem{GK5}I. Gy\"ongy and N.V. Krylov,  
{\em Expansion of solutions of parametrized equations and
acceleration of numerical methods\/}, Illinois Journal of
Mathematics,  {\bf 50} (2006), 473-514.  
Special Volume in Memory of 
Joseph Doob (1910-2004). 

 
\bibitem{GK1}I. Gy\"ongy and N.V. Krylov, 
{\em First derivative estimates 
for finite difference schemes\/}, 
Math. Comp., {\bf 78} (2009), pp. 2019--2046



\bibitem{GK2}I. Gy\"ongy and N.V. Krylov, 
{\em Higher order derivative estimates for finite-difference 
schemes\/},  Methods and Applications of Analysis, 
16 (2009), pp. 187--216. 

\bibitem{GK3}I. Gy\"ongy and N.V. Krylov, 
{\em Accelerated finite difference 
schemes for second order degenerate
elliptic and parabolic problems in the whole space}, 
submitted.

 
\bibitem{WSP} I. Gy\"ongy and N.V. Krylov,  
{\em Stochastic partial differential equations with 
unbounded coefficients and applications III.\/}, Stochastics and
Stochastics Rep. 40 (1992), 77--115. 

 
\bibitem{KP}
P. Kloeden and E. Platen, {\em Numerical solution 
of stochastic differential equations\/},  Springer-Verlag, Berlin, 1992.
 

 
\bibitem{KPH}
P. Kloeden, E. Platen and N. Hoffmann {\em Extrapolation methods 
for the weak approximation to It\^o diffusions \/},  
SIAM J. Numer. Analysis {\bf 32} (1995), 1519--1534. 


\bibitem{Kr99} N.V. Krylov,
{\em An analytic approach to SPDEs}, pp. 185-242 in
Stochastic Partial Differential Equations: Six Perspectives,
Mathematical Surveys and Monographs, Vol. 64,
AMS, Providence, RI, 1999.

 \bibitem{KR} N.V. Krylov and B.L. Rosovskii, 
{\em Stochastic evolution equations\/}, J. Soviet Mathematics,
{\bf 16} (1981),
1233--1277. 

 
\bibitem{MT} P. Malliavin and A. Thalmaier, 
{\em Numerical error for SDE: asymptotic expansion and hyperdistributions},  
C. R. Math. Acad. Sci. Paris 336 (2003), no. 10, 851--856.

\bibitem{Ma} G.I. Marchuk,  
{\em Methods of numerical mathematics\/}, Third edition,
``Nauka'', Moscow, 1989 (in Russian).

\bibitem{MS} G.I. Marchuk and V.V. Shaidurov, 
{\em Difference methods and their extrapolations\/}, New York 
Berlin Heidelberg, Springer 1983. 

 

\bibitem{Ri} L.F. Richardson, {\em The approximative 
arithmetical solution by finite differences of physical problems 
involving differential equations\/}, Philos. Trans. Roy. Soc.
London, Ser. A, 210 (1910), 307-357. 

\bibitem{RG} L.F. Richardson and J.A. Gaunt, 
{\em The Deferred Approach 
to the Limit\/}, Phil. Trans. Roy. Soc. London Ser. A, Vol. 226 (1927),  
299-361.

\bibitem{R} B.L. Rozovskii, 
{\em Stochastic evolution systems. Linear theory and applications to nonlinear filtering\/}, Kluwer Academic Publishers Group, Dordrecht, 1990. 

\bibitem{TT} D. Talay and L. Tubaro, 
{\em Extension of the global error for numerical schemes solving stochastic differential equations\/}, Stochastic Analysis and Applications, 
8 (1990), 483-509.


\bibitem{Y} H. Yoo, {\em An analytic approach to stochastic 
partial differential equations and its applications\/}, 
Thesis, University of Minnesota (1998). 
 
\bibitem{Y1} H. Yoo, {\em Semi-discretization of stochastic partial differential equations on $R^1$ by a finite-difference method\/},  
Math. Comp.  {\bf 69}  (2000),  653--666.  
\end{thebibliography}
\end{document}